\numberwithin{equation}{section}
\numberwithin{figure}{section}
\theoremstyle{plain}
\newtheorem{thm}{\protect\theoremname}[section]
\theoremstyle{plain}
\newtheorem{cor}[thm]{\protect\corollaryname}
\theoremstyle{plain}
\newtheorem{conjecture}[thm]{\protect\conjecturename}
\theoremstyle{definition}
\newtheorem{defn}[thm]{\protect\definitionname}
\theoremstyle{plain}
\newtheorem{lem}[thm]{\protect\lemmaname}
\theoremstyle{plain}
\newtheorem{prop}[thm]{\protect\propositionname}
\theoremstyle{remark}
\newtheorem{rem}[thm]{\protect\remarkname}
\theoremstyle{definition}
\newtheorem{example}[thm]{\protect\examplename}
\providecommand{\conjecturename}{Conjecture}
\providecommand{\corollaryname}{Corollary}
\providecommand{\definitionname}{Definition}
\providecommand{\examplename}{Example}
\providecommand{\lemmaname}{Lemma}
\providecommand{\propositionname}{Proposition}
\providecommand{\remarkname}{Remark}
\providecommand{\theoremname}{Theorem}
\begin{document}
\title[Generalization of Jordan-Lie inner ideals of Finite Dimensional Associative
Algebras]{\singlespacing{} Generalization of Jordan-Lie of Finite Dimensional Associative Algebras}
\author{}
\author{Hasan M. S. Shlaka}
\address{Department of Mathematics, Faculty of Computer Science and Mathematics,
University of Kufa, Al-Najaf, Iraq.}
\email{hasan.shlaka@uokufa.edu.iq}
\begin{abstract}
We generalize Baranov and Shlaka's results about bar-minimal Jordan-Lie
and regular inner ideals of finite dimensional associative algebras.
Let $A$ be a finite dimensional $1$-perfect associative algebras
$A$ over an algebraically closed field $\mathbb{F}$ of arbitrary
characteristic $p$ and let $A'$ be a subalgebra of $A$. We prove
that for any bar-minimal Jordan-Lie inner ideal $B'$ a $A'$, there
is a bar-minimal Jordan-Lie inner ideal of $A$ that contains $B'$
and if $B$' is regular, then is a regular inner ideal of $A$ that
contains $B'$. We also prove that for any strict orthogonal pair
$(e',f')$ in $A'$, there is a strict orthogonal idempotent pair
$(e,f)$ in $A$ such that $e'A'f'\subseteq eAf$\href{http://.}{.}
\end{abstract}

\maketitle
\begin{singlespace}

\section{Introduction}
\end{singlespace}

\begin{singlespace}
\global\long\def\ad{\operatorname{\rm ad}}%

\global\long\def\bbR{\mathbb{R}}%

\global\long\def\bbF{\mathbb{F}}%

\global\long\def\ccR{\mathcal{R}}%

\global\long\def\ccF{\mathcal{F}}%

\global\long\def\ccM{\mathcal{M}}%

\global\long\def\ccL{\mathcal{L}}%

\global\long\def\ccP{\mathcal{P}}%

\global\long\def\ccB{\mathcal{B}}%

\end{singlespace}

\selectlanguage{english}%
\global\long\def\dlim{\operatorname{\underrightarrow{{\rm lim}}}}%

\selectlanguage{british}%
\begin{singlespace}
\global\long\def\core{\operatorname{\rm core}}%

\global\long\def\dim{\operatorname{\rm dim}}%

\global\long\def\End{\operatorname{\rm End}}%

\global\long\def\gl{\operatorname{\rm \mathfrak{gl}}}%

\global\long\def\Im{\operatorname{\rm Im}}%

\global\long\def\ker{\operatorname{\rm ker}}%

\global\long\def\rad{\operatorname{\rm rad}}%

\global\long\def\rank{\operatorname{\rm rank}}%

\global\long\def\Range{\operatorname{\rm Range}}%

\global\long\def\sym{\operatorname{\rm sym}}%

\global\long\def\skew{\operatorname{\rm skew}}%

\global\long\def\sl{\operatorname{\rm \mathfrak{sl}}}%

\global\long\def\sp{\operatorname{\rm \mathfrak{sp}}}%

\global\long\def\so{\operatorname{\rm \mathfrak{so}}}%

\global\long\def\su{\operatorname{\rm \mathfrak{su^{*}}}}%

\global\long\def\u{\operatorname{\rm \mathfrak{u^{*}}}}%

\end{singlespace}

 Let $L$ be a Lie algebra over a field $\mathbb{F}$. Recall that
a subspace $B$ of $L$ is said to be an \emph{inner ideal} of $L$
if $[B,[B,L]]\subseteq B$. It is well-known that every ideal is inner,
but inner ideals are not even subalgebras. Lie Inner ideals were first
introduced by Benkart \cite{Benkart1976}. In \cite{Benkart1977},
Benkart highlighted the relation between abelian inner ideals and
the adjoint nilpotent elements of Lie algebra and proved that the
she showed that abelian inner ideals and the adjoint nilpotent elements
of Lie algebras are related closely. Since the adjoint nilpotent elements
yield an elementary criterion for distinguishing the non-classical
from classical simple Lie algebras in positive characteristic, inner
ideals play a fundamental role in classifying Lie algebras \cite{Premet1987,Premet1986}.
Inner ideals are useful in constructing grading for Lie algebras \cite{LopGarLozNeh2009}.
It was shown in \cite{LopGarLoz2008} that inner ideals play role
similar to that of one-sided ideals in associative algebras and can
be used to develop Artinian structure theory for Lie algebras. Inner
ideals of classical Lie algebras were classified by Benkart and Fernández
López \cite{Benkart1976,BenkartLopez}, using the fact that most of
these algebras can be constructed as the derived Lie subalgebras of
(involution) simple Artinian associative rings. In this paper we use
a similar approach to study inner ideals of the derived Lie subalgebras
of finite dimensional associative algebras. These algebras generalize
the class of simple Lie algebras of classical type and are closely
related to the so-called root-graded Lie algebras \cite{Baranov2018Arx}.
They are also important in developing representation theory of non-semisimple
Lie algebras. As we do not require our algebras to be semisimple we
have a lot more inner ideals to take care of (including all ideals!),
so some reasonable restrictions are needed. We believe that such a
restriction is the notion of a Jordan-Lie inner ideals introduced
by Fernández López in \cite{FLopez2006} (see also \cite{FLopez2019}).
Baranov and Shlaka in \cite{BavShk} studied Jordan-Lie inner ideals
of finite dimensional associative algebras. 

In this paper, we generalize Baranov and Shlaka's results to the case
when the Jordan-Lie inner ideals are of subalgebras of of those associative
ones. We believe that this results is important to classify Jordan-Lie
inner ideals of simple locally finite associative algebras. We need
some notations to state our main results. The ground field $\bbF$
is algebraically closed of arbitrary characteristic $p$. Let $A$
be a finite dimensional associative algebra over $\bbF$ and let $R$
be the radical of $A$. Recall that $A$ becomes a Lie algebra $A^{(-)}$
under $[x,y]=xy-yx$. Put $A^{(0)}=A^{(-)}$ and $A^{(k)}=[A^{(k-1)},A^{(k-1)}]$,
$k\geq1$. Let $L=A^{(k)}$ for some $k\ge0$ and let $B$ be an inner
ideal of $L$. Then $B$ is said to be \emph{Jordan-Lie inner ideal}
(or simply \emph{J-Lie}) of $L$ if $B^{2}=0$. A J-Lie $B$ is called
\emph{regular} if $BAB\subseteq B$. Denote by $\bar{B}$ the image
of $B$ in $\bar{L}=L/R\cap L$. Let $X$ be an inner ideal of $\bar{L}$.
We say that $B$ is \emph{$X$-minimal} (or simply, \emph{bar-minimal})
if $\bar{B}=X$ and for every inner ideal $B'$ of $L$ with $\bar{B}'=X$
and $B'\subseteq B$ we have $B'=B$. Let $e$ and $f$ be idempotents
in $A$. Then the\emph{ idempotent pair} $(e,f)$ is said to be \emph{orthogonal
}if $ef=fe=0$ and \emph{strict }if for each simple component \emph{$S$
}of $\bar{A}=A/R$, the projections of $\bar{e}$ and $\bar{f}$ on
$S$ are either both zero or both non-zero. Recall that $A$ is said
to be $1$-perfect if it has no ideals of codimension $1$ and an
ideal is called $1$-perfect if it is $1$-perfect as an algebra (see
\cite[Definition 5.3]{BavShk} for more details). We are now ready
to state our main results.
\begin{thm}
\label{thm:B'<eAf} Let $A$ be a $1$-perfect finite dimensional
associative algebra and $R$ be its radical. Let $A'\subseteq A$
be subalgebra and let $B'$ be a bar-minimal J-Lie of $A'$. Suppose
that $p\ne2,3$. Then there is a strict idempotent pair $(e,f)$ in
$A$ such that $B'\subseteq eAf$. 
\end{thm}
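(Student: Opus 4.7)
The plan is to produce $(e,f)$ in two stages: first construct a strict orthogonal idempotent pair $(e',f')$ living entirely inside the subalgebra $A'$ which already captures $B'$, and then lift $(e',f')$ from $A'$ to the ambient algebra $A$. Chaining the two inclusions will give $B' \subseteq e'A'f' \subseteq eAf$.

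For the first stage I would apply the Baranov--Shlaka theorem of \cite{BavShk} (or its analogue set up earlier in the paper) inside the subalgebra $A'$: since $B'$ is a bar-minimal J-Lie of $A'$, this furnishes a strict orthogonal idempotent pair $(e',f')$ in $A'$ with $B' \subseteq e'A'f'$. The restriction $p \neq 2,3$ enters precisely here, because it is required by the Jordan-pair and Peirce-decomposition arguments underpinning \cite{BavShk} (they fail for $\mathfrak{sl}_2$-like subquotients in characteristics $2$ and $3$). For the second stage I would invoke the companion result announced in the abstract of this paper: every strict orthogonal pair $(e',f')$ in $A'$ extends to a strict orthogonal idempotent pair $(e,f)$ in $A$ such that $e'A'f' \subseteq eAf$. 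Applying it to the pair produced in the first stage yields the required $(e,f)$, and the two inclusions combine to give $B' \subseteq eAf$.

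The main obstacle I anticipate is the first stage. While $A$ is $1$-perfect by hypothesis, $A'$ is only a subalgebra and need not itself be $1$-perfect; moreover $\rad(A')$ can properly contain $R \cap A'$, so the ``bar'' used in the bar-minimality of $B'$ is taken relative to $\rad(A') \cap A'^{(k)}$ rather than relative to $R$. To apply \cite{BavShk} inside $A'$ one therefore has to either (i) pass to a suitable $1$-perfect subalgebra of $A'$ still containing $B'$, for instance the one generated by an appropriate iterated commutator subspace, and run the \cite{BavShk} argument there; or (ii) redo the key steps of \cite{BavShk}, namely the bar-minimal Peirce set-up and the verification that the resulting idempotents are strict, in a setting that does not assume global $1$-perfectness. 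By contrast, the second stage should be a direct application of the strict-pair-lifting lemma and presents no serious difficulty beyond what is already handled there.
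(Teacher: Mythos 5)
Your proposal is circular at its crucial step. The ``companion result announced in the abstract'' that you invoke for the second stage is Corollary \ref{cor:e'Af'<eAf}, and in the paper that corollary is \emph{deduced from} Theorem \ref{thm:B'<eAf}: its proof consists precisely of observing that $e'A'f'$ is a bar-minimal J-Lie of $A'$ (by Theorem \ref{thm:BavShkTheorem1.1}) and then applying Theorem \ref{thm:B'<eAf}. So the lifting of a strict orthogonal pair from $A'$ to $A$ is not an available tool here; it is essentially equivalent to the statement you are trying to prove, and deferring to it leaves the entire content of the theorem unproved. Your first stage, by contrast, is fine and matches the paper: Theorem \ref{thm:BavShkTheorem1.1} applies to the arbitrary finite dimensional algebra $A'$ (no $1$-perfectness is needed there), giving $B'=e'A'f'$ with $(e',f')$ a strict orthogonal pair in $A'$, the bar being taken relative to $\rad(A')$ exactly as you note.

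What is missing is the actual lifting argument, which is where the paper does its work. It first treats the case $R^{2}=0$ (Proposition \ref{prop:B'<eAf  if R^2=00003D0}): writing $f'=s+r$ with respect to a Wedderburn--Malcev decomposition $A=S\oplus R$ and using $f'f'=f'$ together with $R^{2}=0$, one checks that $f=s+sr$ is an idempotent of $A$ with $f'f=f'$ and $ff'=f$, and similarly for $e$; this yields $e'A'f'=ee'Af'f\subseteq eAf$, orthogonality of $(e,f)$, and strictness via Theorem \ref{thm:Bavshk1.2}. The general case is then an induction on the nilpotency index of $R$: pass to $\tilde{A}=A/R^{2}$, apply the base case there, pull the resulting data back into a large subalgebra $G$ and its $1$-perfect radical $\ccP_{G}$ (whose radical has strictly smaller nilpotency index), extract a core and a bar-minimal refinement $B_{3}\subseteq B'$ to which the inductive hypothesis applies, and finally use the bar-minimality of $B'$ to conclude $B'=B'\cap eAf\subseteq eAf$. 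None of this is replaced by anything in your proposal, so there is a genuine gap rather than an alternative route.
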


We also prove the following results
\begin{cor}
\label{cor:B'<B minimal and regular} Let $A$ be a $1$-perfect finite
dimensional associative algebra and $R$ be its radical. Let $A'\subseteq A$
be subalgebra and let $B'$ be a bar-minimal J-Lie of $A'$. Suppose
that $p\ne2,3$. Then 
\end{cor}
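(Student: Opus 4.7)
The plan is to derive this corollary from Theorem~\ref{thm:B'<eAf} via a finite-dimensional descent inside the Peirce space $eAf$.

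First, I would apply Theorem~\ref{thm:B'<eAf} to obtain a strict orthogonal idempotent pair $(e,f)$ in $A$ with $B'\subseteq eAf$. Since $(e,f)$ is orthogonal, $fe=0$, so $(eAf)(eAf)=eAfeAf=0$; a direct computation using $e^{2}=e$, $f^{2}=f$ and $ef=fe=0$ shows that $eAf$ is an inner ideal of $A^{(-)}$. Hence $eAf$ is itself a J-Lie of $A$, and trivially $(eAf)\,A\,(eAf)\subseteq eAf$ shows it is also regular.

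For the bar-minimality clause, I would choose $B$ to be minimal (under inclusion) in the nonempty collection
\[
\mathcal{F}=\{C:C\text{ is a J-Lie of }A\text{ with }B'\subseteq C\subseteq eAf\},
\]
which exists because $A$ is finite dimensional and $eAf\in\mathcal{F}$. To verify that $B$ is bar-minimal in the sense of the definition, suppose $C\subsetneq B$ is a J-Lie of $A$ with $\bar C=\bar B$. Then $C+B'$ is a subspace of $eAf$ squaring to zero, and I would check that $C+B'$ is again an inner ideal of $A^{(k)}$ (using that all brackets of two elements of $eAf$ vanish, so the inner-ideal condition reduces to a linear compatibility of $C$ and $B'$ with the idempotents $e,f$). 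Hence $C+B'\in\mathcal{F}$, and minimality of $B$ forces $C+B'=B$; combining this with $\bar C=\bar B$ and exploiting the bar-minimality of $B'$ in $A'$ should then force $C=B$, contradicting $C\subsetneq B$.

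For the regularity clause, if $B'$ is regular in $A'$, the subspace $eAf$ is already a regular J-Lie of $A$ containing $B'$, so the existence of a regular J-Lie of $A$ containing $B'$ is immediate. To obtain a regular bar-minimal such $B$, I would rerun the same descent restricted to the nonempty subclass of regular members of $\mathcal{F}$; a minimal element there provides the desired regular bar-minimal J-Lie of $A$ containing $B'$.

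The main obstacle will be the step in the bar-minimality check that passes from $C+B'=B$ back to $C=B$. This requires a careful comparison of $B'$ with the radical part $R\cap B$ and relies on both the bar-minimality of $B'$ in $A'$ and structural lemmas from \cite{BavShk} on how a J-Lie can sit inside $eAf$ modulo the radical. The inner-ideal closure of $C+B'$ inside the nilpotent subspace $eAf$ should follow readily from the collapsing of brackets, but the final identification $C=B$ is the delicate point.
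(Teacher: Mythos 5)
Your first step (invoking Theorem~\ref{thm:B'<eAf} to place $B'$ inside $eAf$ for a strict orthogonal idempotent pair $(e,f)$) and your treatment of the regularity clause agree with the paper: $eAf$ squares to zero and $(eAf)A(eAf)=e(AfAeA)f\subseteq eAf$, so $B=eAf$ already answers part (2); this is exactly Lemma~\ref{lem:Bar=000026Row 4.1}. For part (1), however, you have overlooked the decisive fact and replaced it with a descent argument that does not close. Theorem~\ref{thm:BavShkTheorem1.1} says that a J-Lie of $A$ is bar-minimal \emph{if and only if} it equals $eAf$ for a strict orthogonal idempotent pair; since the pair produced by Theorem~\ref{thm:B'<eAf} is strict and orthogonal, $B=eAf$ is itself already bar-minimal, and the corollary is finished in one line. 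This is precisely the paper's proof.

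Your substitute argument for bar-minimality has two genuine gaps. First, a minimal element $B$ of your family $\mathcal{F}$ is minimal only among J-Lies \emph{containing} $B'$, whereas bar-minimality quantifies over all inner ideals $C\subseteq B$ with $\bar{C}=\bar{B}$, which need not contain $B'$. You try to repair this by passing to $C+B'$, but the claim that $C+B'$ is again an inner ideal of $A^{(k)}$ is unjustified: for $b=c+b'$ the test $bLb\subseteq C+B'$ produces the cross terms $cLb'+b'Lc$ and the term $b'Lb'$, all of which land in $eAf$ but not visibly in $C+B'$ (note that $B'$ is an inner ideal only of $A'^{(k)}$, not of $A^{(k)}$, so even $b'Lb'\subseteq B'$ fails in general), and sums of inner ideals are not inner ideals. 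Second, even granting $C+B'=B$, your final deduction of $C=B$ from the bar-minimality of $B'$ in $A'$ is exactly the step you flag as ``delicate'' and leave unproved; there is no evident way to transfer minimality of $B'$ inside $A'$ to a statement about a subspace $C$ of $A$ that need not meet $A'$ in anything related to $B'$. Both difficulties disappear once Theorem~\ref{thm:BavShkTheorem1.1} is applied directly to $eAf$.
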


\begin{enumerate}
\item \emph{there is a bar-minimal J-Lie $B$ of $A$ such that $B'\subseteq B$.}
\item \emph{there is a regular inner ideal $B$ of $A$ such that $B'\subseteq B$.}
\end{enumerate}
\begin{cor}
\label{cor:e'Af'<eAf} Let $A$ be a $1$-perfect finite dimensional
associative algebra and $R$ be its radical. Let $A'\subseteq A$
be a perfect finite dimensional associative algebra and let $(e',f')$
be a strict orthogonal idempotent pair of $A'$. Then there is a strict
orthogonal idempotent pair $(e,f)$ in $A$ such that $e'A'f'\subseteq eAf$. 
\end{cor}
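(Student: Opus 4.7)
The plan is to produce, from the strict orthogonal pair $(e',f')$, a bar-minimal J-Lie of $A'$ and then invoke Theorem~\ref{thm:B'<eAf} directly. Set $B':=e'A'f'$.

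That $B'$ is a J-Lie of $A'^{(-)}$ is a routine Peirce calculation: the orthogonality $e'f'=f'e'=0$ gives $b_1b_2=b_2b_1=0$ for all $b_1,b_2\in B'$, hence
\[
[b_1,[b_2,a]] \;=\; -\,b_1ab_2-b_2ab_1 \;\in\; e'A'f' \;=\; B'
\]
for every $a\in A'$, and the same identity yields $(B')^2=0$. For bar-minimality, the image $(\bar{e'},\bar{f'})$ of the pair in the semisimple quotient $\bar{A'}=A'/R'$ is still strict orthogonal, so $\bar{B'}=\bar{e'}\bar{A'}\bar{f'}$ decomposes as a direct sum of Peirce corners in precisely those simple components of $\bar{A'}$ where both idempotents project non-trivially. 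Using the perfectness of $A'$ and an argument parallel to the one employed in \cite{BavShk} for the case $A'=A$, I would show that any inner ideal $C'\subseteq B'$ of $A'$ with $\bar{C'}=\bar{B'}$ must already contain the radical piece $e'R'f'$, and hence coincide with $B'$.

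Once $B'$ is identified as a bar-minimal J-Lie of $A'$, Theorem~\ref{thm:B'<eAf} furnishes a strict orthogonal idempotent pair $(e,f)$ in $A$ with $e'A'f'=B'\subseteq eAf$, which is exactly the claim. The main obstacle is the bar-minimality step: one must rule out proper inner subideals of $e'A'f'$ whose image in $\bar{A'}$ equals $\bar{B'}$, and this reduces to showing that the radical contribution $e'R'f'$ cannot be shaved off---a form of control provided precisely by the strictness hypothesis on $(e',f')$ (without strictness, one could typically shrink inside radical components where the idempotents behave asymmetrically).
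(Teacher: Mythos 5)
Your proposal is correct and follows essentially the same route as the paper: identify $B'=e'A'f'$ as a bar-minimal J-Lie of $A'$ and then apply Theorem~\ref{thm:B'<eAf}. The only difference is that the bar-minimality step you sketch at length (including the Peirce computation and the claim that the radical piece $e'R'f'$ cannot be shaved off) is exactly the ``if'' direction of Theorem~\ref{thm:BavShkTheorem1.1}, which the paper simply cites, so no new argument is needed there.
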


We believe that Jordan-Lie inner ideals of simple diagonal locally
finite Lie algebras can be classified using the results in this paper.
Recall that an algebra $A$ is called locally finite if every finite
set of elements is contained in a finite dimensional algebra (see
\cite{BahBavZal2004,bahStr1995,baranov1998} for more details. In
\cite{shlaka2020,Shlaka2021}, the researcher highlighted and proved
some results about locally finite associative algebras. Combining
the results of this paper together with the results proved in \cite{Shlaka2021},
it is believed the following conjecture is true.
\begin{conjecture}
Let $A$ be a simple locally finite associative algebra over $\mathbb{F}$
and let $B$ be a Jordan-Lie inner ideal of $L=[A,A]$. Then every
Jordan-Lie inner ideal $B$ of $L$ is regular.
\end{conjecture}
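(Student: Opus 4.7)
My plan is to reduce the conjecture to the finite-dimensional results of this paper using the local finiteness of $A$ and then pass to a direct limit. Since $A$ is locally finite, write $A=\bigcup_{n\ge 1}A_n$ as a directed union of finite-dimensional subalgebras, arranged (using the reductions of \cite{Shlaka2021} together with the structure theory of simple locally finite associative algebras from \cite{BahBavZal2004,bahStr1995,baranov1998}) so that each $A_n$ is $1$-perfect and the inclusions $A_n\subseteq A_{n+1}$ respect the semisimple/radical decomposition. For $B$ a J-Lie of $L=[A,A]$, define $B_n:=B\cap[A_n,A_n]$; clearly $B_n^2\subseteq B^2=0$ and
\[
[B_n,[B_n,[A_n,A_n]]]\subseteq [B,[B,L]]\cap [A_n,A_n]\subseteq B_n,
\]
so each $B_n$ is a J-Lie of $[A_n,A_n]$ and $B=\bigcup_n B_n$.

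Next, shrink $B_n$ within its bar-class to a bar-minimal J-Lie $B_n^\ast$ by a standard descending-chain argument (as used in \cite{BavShk}). For every $m\ge n$, Theorem~\ref{thm:B'<eAf} applied with $A'=A_n\subseteq A_m$ produces a strict orthogonal idempotent pair $(e_{n,m},f_{n,m})$ in $A_m$ with $B_n^\ast\subseteq e_{n,m}A_mf_{n,m}$. Because orthogonality forces
\[
(e_{n,m}A_mf_{n,m})A_m(e_{n,m}A_mf_{n,m})\subseteq e_{n,m}A_mf_{n,m},
\]
each such set is a regular inner ideal of $A_m^{(-)}$. Corollary~\ref{cor:e'Af'<eAf} applied inductively in $m$ can then be used to arrange the idempotent pairs compatibly, producing $(e_{n,m+1},f_{n,m+1})$ with $e_{n,m}A_mf_{n,m}\subseteq e_{n,m+1}A_{m+1}f_{n,m+1}$, so that $C_n:=\dlim_m e_{n,m}A_mf_{n,m}$ is a well-defined regular inner ideal of $L$ containing $B_n^\ast$.

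The principal obstacle is to close the gap between this cover by regular inner ideals and the required regularity of $B$ itself: one must show $B\subseteq\bigcup_n C_n$ and that $\bigcup_n C_n$ is a regular inner ideal of $L$. The difficulty lies both in arranging the $C_n$ coherently across $n$ (using Corollary~\ref{cor:e'Af'<eAf} in the $n$-direction as well, so that $C_n\subseteq C_{n+1}$) and in bridging $B_n^\ast$ to the full $B_n$; the latter should follow from bar-minimality and the simplicity of $A$ via a saturation/trace argument, since in the simple locally finite setting the radical contribution to each $A_n$ must stabilise at zero in the limit. Once this is done, simplicity forces $\bigcup_n C_n$ to be exactly the closure $eAf$ of $B$ under a suitable limiting (multiplier) idempotent pair $(e,f)$ in $A$, and then $BAB\subseteq (eAf)A(eAf)\subseteq eAf=B$, proving regularity. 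I expect the coherent choice of strict idempotent pairs across the double index $(n,m)$, using the strictness clause of Corollary~\ref{cor:e'Af'<eAf} in an essential way together with the simplicity of $A$, to be the hardest step, since each application of the corollary only produces existence, not canonicity, and the passage to the limit requires the ambiguity to shrink.
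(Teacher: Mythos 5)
The statement you are trying to prove is explicitly labelled a \emph{Conjecture} in the paper: the author does not prove it, and only remarks that it is ``believed'' to follow by combining the present results with those of \cite{Shlaka2021}. So there is no proof in the paper to compare against, and your proposal should be judged on its own merits as an attempted resolution of an open problem. Read that way, it is a plausible strategy outline, but it is not a proof: the two obstacles you yourself flag are exactly the places where the argument is missing, not merely hard. First, passing from $B_n$ to a bar-minimal $B_n^\ast$ discards part of $B_n$, and nothing in the paper lets you recover it; Theorem~\ref{thm:B'<eAf} and Corollary~\ref{cor:B'<B minimal and regular} only apply to bar-minimal J-Lies, so your covering sets $C_n$ are only guaranteed to contain $\bigcup_n B_n^\ast$, which may be a proper subspace of $B=\bigcup_n B_n$. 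The assertion that ``the radical contribution must stabilise at zero in the limit'' is precisely the content one would need to prove (it is essentially the role the author assigns to \cite{Shlaka2021}), and no argument for it is given here. Second, the direct limit $\dlim_m e_{n,m}A_mf_{n,m}$ is not well defined from the existence statements you invoke: Theorem~\ref{thm:B'<eAf} produces \emph{some} strict pair at each stage with no compatibility between the pair chosen at stage $m$ and the one at stage $m+1$, and Corollary~\ref{cor:e'Af'<eAf} likewise only asserts existence of a containing pair, not that it can be chosen to extend a previously fixed one coherently in both indices $n$ and $m$. Without that coherence the union $\bigcup_n C_n$ need not be an inner ideal at all, let alone regular.

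There is also a gap at the very last step even granting everything above: from $B\subseteq eAf$ and $(eAf)A(eAf)\subseteq eAf$ you can only conclude $BAB\subseteq eAf$, not $BAB\subseteq B$; regularity of $B$ requires $B=eAf$ (or some independent argument that $BAB\subseteq B$), and the equality $B=eAf$ is asserted by appeal to ``simplicity'' without justification --- indeed for a general J-Lie $B$ of $L$, which is not assumed bar-minimal, there is no reason for $B$ to equal the regular inner ideal containing it. In summary: your reduction to the finite-dimensional theorems of the paper is the natural first move and matches the author's stated intent, but the three steps you would need --- (i) controlling the loss from bar-minimalisation, (ii) making the idempotent pairs coherent so the limit exists, and (iii) upgrading $B\subseteq eAf$ to $BAB\subseteq B$ --- are each genuinely open in your write-up, which is why the paper states this as a conjecture rather than a theorem.
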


\section{Preliminaries}

   Throughout this paper, unless otherwise specified, the ground
field $\bbF$ is algebraically closed of arbitrary characteristic
$p$; $A$ is a finite dimensional associative algebra over $\bbF$;
$R=\rad A$ is the radical of $A$; $S$ is a Levi (i.e. maximal semisimple)
subalgebra of $A$, so $A=S\oplus R$; $L=A^{(k)}$ for some $k\ge0$;
$\rad L$ is the solvable radical of $L$ and $N=R\cap L$ is the
\emph{nil-radical} of $L$. If $V$ is a subspace of $A$, we denote
by $\bar{V}$ its image in $\bar{A}=A/R$. In particular, $\bar{L}=(L+R)/R\cong L/N$.
Since $R$ is a nilpotent ideal of $A$ the ideal $N=R\cap L$ of
$L$ is also nilpotent, so $N\subseteq\rad L$. It is easy to see
that $N=\rad L$ if $p=0$ and $k\ge1$, so $L/N$ is semisimple in
that case. Recall that $A$ is said to be \emph{separable} if for
every extension $\mathcal{F}$ of the field $\mathbb{F}$ one has
$A_{\mathcal{F}}=A\otimes\mathcal{F}$ is semisimple. In particular,
every separable algebra is semisimple \cite{Drozd}. The following
theorem is Wedderburn-Malcev Principal Theorem. For the proof see
\cite[Theorem 1]{BavMudShk2018}.
\begin{thm}[Wedderburn-Malcev]
 Let $A$ be a finite dimensional associative algebra and let $R$
be the radical of $A$. Suppose that $A/R$ is separable. Then the
following hold.

(1) There exists a semisimple subalgebra $S$ of $A$ such that $A=S\oplus R$
(Wedderburn). 

(2) If $Q$ is a semisimple subalgebra of $A$ then there exists $r\in R$
such that $Q\subseteq(1+r)S(1+r)^{-1}$ (Malcev).

(3) If $S_{1}$ and $S_{2}$ are two subalgebras of $A$ such that
$A=S_{i}\oplus R$ ($i=1,2$) then there exists $r\in R$ such that
$S_{1}=(1+r)S_{2}(1+r)^{-1}$ (Malcev). 
\end{thm}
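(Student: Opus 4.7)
The plan is to prove all three parts by induction on the nilpotency index $n$ of $R$, reducing each to the case of a square-zero radical and then invoking vanishing of Hochschild cohomology for separable algebras. For part (1), I would induct on the smallest $n$ with $R^{n}=0$. The case $n\le 1$ is trivial. For $n\ge 2$, pass to $\tilde{A}:=A/R^{n-1}$, whose radical $R/R^{n-1}$ has nilpotency index $n-1$; by induction there is a Wedderburn complement $\tilde{S}\subseteq\tilde{A}$, and its preimage $A_{0}\subseteq A$ is a subalgebra with $A_{0}\cap R=R^{n-1}$ and $(R^{n-1})^{2}\subseteq R^{2n-2}=0$, so the problem reduces to splitting $A_{0}$, whose radical squares to zero. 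In this base situation $R$ is an $(A/R)$-bimodule; picking any linear section $\sigma\colon A/R\to A$, the defect $\omega(\bar{x},\bar{y}):=\sigma(\bar{x})\sigma(\bar{y})-\sigma(\bar{x}\bar{y})$ is a Hochschild $2$-cocycle into $R$. Separability of $A/R$ gives $H^{2}(A/R,R)=0$, so $\omega=d\tau$ for some linear $\tau\colon A/R\to R$, and $S:=(\sigma+\tau)(A/R)$ is the desired complement.

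For part (2), I would first observe that the projection $\pi\colon A=S\oplus R\to S$ restricts to an injection on $Q$, because $Q\cap R$ is a nil ideal of the semisimple algebra $Q$ and is therefore zero. Then I would produce $r\in R$ with $(1+r)\pi(q)(1+r)^{-1}=q$ for all $q\in Q$, again by induction on $n$: use the inductive hypothesis applied in $A/R^{n-1}$ to align $Q$ with $\pi(Q)$ modulo $R^{n-1}$ via some $1+r_{1}$, then treat the residual square-zero adjustment in the base case. In that base case (where the difference lies in an ideal $I$ with $I^{2}=0$), the map $\delta(q):=q-\pi(q)$ is a derivation of $Q$ into the $(A/I)$-bimodule $I$ via $\pi$; by separability of $Q$, $H^{1}(Q,I)=0$, so $\delta$ is inner, $\delta(q)=qr-rq$ for some $r\in I$. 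Since $r^{2}=0$, $(1+r)^{-1}=1-r$, and a short computation confirms $(1+r)\pi(q)(1+r)^{-1}=q$.

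Part (3) is then immediate from (2) applied with $S=S_{1}$ and $Q=S_{2}$: one obtains $r\in R$ with $S_{2}\subseteq(1+r)S_{1}(1+r)^{-1}$, and the dimension equality $\dim S_{1}=\dim A-\dim R=\dim S_{2}$ upgrades the inclusion to equality. The main technical obstacle in this plan is the cohomological input $H^{1}(Q,M)=H^{2}(A/R,M)=0$ for all bimodules $M$, which is essentially a reformulation of separability; in the algebraically closed setting of the paper, this can be verified explicitly, since $A/R$ is a finite direct sum of full matrix algebras $M_{n_{i}}(\mathbb{F})$, for each of which the separability idempotent $n_{i}^{-1}\sum_{j,k}e_{jk}\otimes e_{kj}$ splits the multiplication map. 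Everything else is a routine reduction to this base case.
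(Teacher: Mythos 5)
The paper does not actually prove this statement: it is quoted as the classical Wedderburn--Malcev theorem with a pointer to \cite{BavMudShk2018} (Theorem 1 there), so there is no internal proof to compare against. Your argument is the standard cohomological proof of Wedderburn--Malcev and is correct in all essentials: the reduction by induction on the nilpotency index to the square-zero case is sound (the preimage $A_{0}$ of $\tilde{S}$ does have radical $R^{n-1}$ with $(R^{n-1})^{2}=0$, and a complement in $A_{0}$ is a complement in $A$), and the base cases are exactly the vanishing of $H^{2}$ (for existence) and $H^{1}$ (for conjugacy) of a separable algebra. Two small points deserve care in a write-up. First, in part (2) the relevant bimodule structure on $I$ is the twisted one, $q\cdot m=qm$ and $m\cdot q=m\,\pi(q)$, so the inner derivation has the form $\delta(q)=qr-r\pi(q)$ rather than $qr-rq$; the identity $q(1-r)=(1-r)\pi(q)$ then gives the conjugation, and the sign of $\tau$ in part (1) (whether $\sigma+\tau$ or $\sigma-\tau$ is multiplicative) depends on your convention for the coboundary. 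Second, you quietly use that the subalgebra $Q$ is itself separable; over the algebraically closed (or any perfect) field of this paper that is automatic for a finite-dimensional semisimple algebra, but it is the hypothesis doing the work in $H^{1}(Q,I)=0$ and should be stated. Neither point is a gap, just bookkeeping.
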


The following theorem is proved in \cite[Theorem 6]{BavMudShk2018}.
\begin{thm}
\label{thm:BavMudShk Theorem6} Let $A$ be a finite dimensional algebra
and let $I$ be a left ideal of $A$. Suppose that $A/R$ is separable.
Then there exists a semisimple subalgebra $S$ of $A$ such that $A=S\oplus R$
and $I=I_{S}\oplus I_{R}$, where $I_{S}=I\cap S$ and $I_{R}=I\cap R$. 
\end{thm}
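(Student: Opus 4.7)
The plan is to locate an idempotent $e\in I$ whose image in $\bar{A}=A/R$ generates $\bar{I}$ as a left ideal, and then to choose a Wedderburn complement $S$ containing $e$. Once $e$ lies simultaneously in $I$ and in $S$, the decomposition $I=(I\cap S)\oplus(I\cap R)$ falls out almost immediately from the right-identity property $\bar{x}\bar{e}=\bar{x}$ for $\bar{x}\in\bar{I}$.

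Since $A/R$ is separable, $\bar{A}$ is semisimple, and every left ideal of $\bar{A}$ is of the form $\bar{A}\bar{e}$ for some idempotent. I choose such an idempotent $\bar{e}\in\bar{I}$ with $\bar{A}\bar{e}=\bar{I}$. The next task is to lift $\bar{e}$ to an idempotent lying inside $I$. I pick any $y\in I$ with $\bar{y}=\bar{e}$, so that $y-y^{2}\in R$, and then apply the standard iteration $y_{k+1}=3y_{k}^{2}-2y_{k}^{3}$, which improves $y_{k}-y_{k}^{2}$ quadratically modulo powers of $R$. The crucial point is that each $y_{k}$ stays inside $I$: since $y_{k}\in I$ and $I$ is a left ideal of $A$, we have $y_{k}^{2}=y_{k}\cdot y_{k}\in A\cdot I\subseteq I$, and similarly $y_{k}^{3}\in I$. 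Because $R$ is nilpotent, after finitely many iterations one obtains an idempotent $e\in I$ lifting $\bar{e}$.

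To embed $e$ in a Levi complement, I apply the Wedderburn--Malcev theorem to fix any Levi $S_{0}$ of $A$, and let $e_{0}\in S_{0}$ be the idempotent corresponding to $\bar{e}$ under $S_{0}\cong\bar{A}$. Then $e$ and $e_{0}$ are two idempotents of $A$ with the same image in $\bar{A}$, so by the standard conjugacy of idempotent lifts there exists $u\in R$ with $e=(1+u)e_{0}(1+u)^{-1}$. By Malcev's uniqueness statement, $S:=(1+u)S_{0}(1+u)^{-1}$ is again a Levi complement of $A$, and $e\in S$ by construction.

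It remains to verify the decomposition. Since $e\in I\cap S$, the left-ideal property of $I$ gives $Se\subseteq I\cap S$. Conversely, any $s\in I\cap S$ satisfies $\bar{s}\in\bar{A}\bar{e}$, hence $\bar{s}\bar{e}=\bar{s}$ and $s-se\in S\cap R=0$, so $s=se\in Se$; thus $I\cap S=Se$. For an arbitrary $x\in I$, I write $\bar{x}=\bar{a}\bar{e}$ with $\bar{a}\in\bar{A}$, lift $\bar{a}$ to some $a\in S$, and observe that $ae\in Se=I\cap S$ while $x-ae\in R\cap I=I_{R}$; this gives $I=(I\cap S)+(I\cap R)$, and the sum is direct because $(I\cap S)\cap(I\cap R)\subseteq S\cap R=0$. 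The only non-routine step is the lifting of $\bar{e}$ to an idempotent inside $I$ rather than merely in $A$; without this, $Ae$ would not sit inside $I$ and the identification $I\cap S=Se$ would collapse.
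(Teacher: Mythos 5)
The paper does not actually prove this statement; it is quoted verbatim from \cite[Theorem 6]{BavMudShk2018} (the same source as the Wedderburn--Malcev theorem and Proposition \ref{Prop:BavMudShk Prop2}), so there is no internal proof to compare against. Your argument is, however, correct and self-contained, and it follows the natural route one would expect the cited source to take: produce an idempotent $e\in I$ lifting a generator $\bar{e}$ of $\bar{I}=\bar{A}\bar{e}$, conjugate a fixed Levi subalgebra so that it contains $e$, and then read off $I\cap S=Se$ and $I=(I\cap S)\oplus(I\cap R)$. The two points that genuinely need care are both handled properly: the iteration $y_{k+1}=3y_{k}^{2}-2y_{k}^{3}$ has integer coefficients, converges because $R$ is nilpotent (one checks $z-z^{2}=n^{2}(3+4n)$ for $n=y-y^{2}$, so the defect squares at each step), and stays inside $I$ because each iterate is a polynomial in $y$ with zero constant term and $I$ is a left ideal; and the conjugacy of the two idempotent lifts $e$ and $e_{0}$ by a unit $1+u$ with $u\in R$ (e.g.\ via $u'=ee_{0}+(1-e)(1-e_{0})\equiv1\bmod R$) is exactly what lets you move the Levi subalgebra onto $e$ while preserving the complement property, since $R$ is stable under conjugation. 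The only stylistic difference from the likely original argument is that \cite{BavMudShk2018} reaches the idempotent $e\in I$ through a bar-minimal left ideal $J\subseteq I$ with $\bar{J}=\bar{I}$ and Proposition \ref{Prop:BavMudShk Prop2} ($J=Ae$), whereas you lift $\bar{e}$ directly; the two devices are interchangeable here, and your version has the mild advantage of not needing the bar-minimality machinery at all.
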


\begin{defn}
\label{def:(e,f) non-degen-1} (1) Let $A$ be a semisimple Artinian
ring and let $\{S_{i}\mid i\in I\}$ be the set of its simple components.
Let $e$ and $f$ be non-zero idempotents in $A$ and let $e_{i}$
(resp. $f_{i}$) be the projection of $e$ (resp. $f$) to $S_{i}$
for each $i\in I$. Then the pair $(e,f)$ is said to be\emph{ strict}
if for each $i\in I$, $e_{i}$ and $f_{i}$ are either both non-zero
or both zero.

(2) Let $A$ be an Artinian ring or a finite dimensional algebra and
let $R$ be its radical. Let $e$ and $f$ be non-zero idempotents
in $A$. We say that $(e,f)$ is \emph{strict} if $(\bar{e},\bar{f})$
is strict in $\bar{A}=A/R$. 
\end{defn}

\begin{lem}
\label{prop:eAf non 0 semi} Let $A$ be a semisimple Artinian ring
and let $(e,f)$ be a non-zero strict idempotent pair in $A$. Then
$eAf\ne0$. 
\end{lem}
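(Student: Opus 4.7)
The plan is to reduce to the simple Artinian case via the Wedderburn--Artin decomposition, and then to argue that in a matrix algebra over a division ring the product $eSf$ is non-zero whenever both idempotents are non-zero.

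First I would write $A=\bigoplus_{i\in I}S_{i}$ as the direct sum of its simple components and decompose $e=\sum_{i}e_{i}$ and $f=\sum_{i}f_{i}$ with $e_{i},f_{i}\in S_{i}$. Since the $S_{i}$ are orthogonal two-sided ideals of $A$, we have $S_{i}S_{j}=0$ for $i\neq j$, and hence
\[
eAf=\bigoplus_{i\in I}e_{i}S_{i}f_{i}.
\]
By the strictness hypothesis, for each $i$ the elements $e_{i}$ and $f_{i}$ are either both zero or both non-zero; and since $(e,f)$ is a non-zero pair there is at least one index $i_{0}$ with $e_{i_{0}}\neq 0$ and $f_{i_{0}}\neq 0$. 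It therefore suffices to show that $e_{i_{0}}S_{i_{0}}f_{i_{0}}\neq 0$, i.e.\ to settle the simple case.

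Next I would treat the simple Artinian ring $S=S_{i_{0}}$. By Wedderburn--Artin, $S\cong\End_{D}(V)$ for some finite dimensional right vector space $V$ over a division ring $D$. Non-zero idempotents of $S$ correspond to non-zero projections onto non-trivial direct summands of $V$; write $V=\Im(e_{i_{0}})\oplus V_{1}=\Im(f_{i_{0}})\oplus V_{2}$. Then $e_{i_{0}}Sf_{i_{0}}$ consists precisely of those $D$-linear endomorphisms of $V$ that kill $V_{2}$ and land in $\Im(e_{i_{0}})$, i.e.\ $e_{i_{0}}Sf_{i_{0}}\cong\operatorname{Hom}_{D}(\Im(f_{i_{0}}),\Im(e_{i_{0}}))$. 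As both $\Im(e_{i_{0}})$ and $\Im(f_{i_{0}})$ are non-zero $D$-spaces, this Hom-space is non-zero.

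Alternatively (and this is the version I would write up, to avoid importing extra notation) one may pick primitive idempotents $e''\le e_{i_{0}}$ and $f''\le f_{i_{0}}$ in $S$; then $Se''$ and $Sf''$ are simple left $S$-modules, and in a simple Artinian ring all simple modules are isomorphic, so $e''Sf''\cong\operatorname{Hom}_{S}(Se'',Sf'')\neq 0$. Since $e''Sf''\subseteq e_{i_{0}}Sf_{i_{0}}\subseteq eAf$, this gives $eAf\neq 0$.

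There is no genuine obstacle: the only point requiring care is keeping track of the strictness condition in order to locate an index $i_{0}$ where \emph{both} $e_{i_{0}}$ and $f_{i_{0}}$ are non-zero, because without strictness the natural candidates might lie in different simple components and the product would vanish. Once that index is pinned down, the remainder is a standard Wedderburn--Artin calculation.
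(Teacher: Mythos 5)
Your proof is correct. The paper itself gives no proof of this lemma: it is the semisimple special case of the quoted Theorem~\ref{thm:Bavshk1.2}(i) from \cite{BavShk}, so there is nothing in the text to compare against. Your argument --- decompose into simple components, use strictness to find an index where both projections are non-zero, and then identify $e_{i_0}S_{i_0}f_{i_0}$ with a non-zero Hom-space via Wedderburn--Artin (or via primitive idempotents and the isomorphism $e''Sf''\cong\operatorname{Hom}_{S}(Se'',Sf'')$ between isomorphic simple modules) --- is complete and is the standard way to supply the missing proof; you also correctly identify strictness as the only point where the statement could fail.
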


\begin{thm}
\label{thm:Bavshk1.2} \cite[Theorem 1.2]{BavShk} Let $A$ be an
Artinian ring or a finite dimensional associative algebra over any
field and let $(e,f)$ and $(e',f')$ be idempotent pairs in $A$.
Suppose that $(e,f)$ is strict. Then the following hold.
\end{thm}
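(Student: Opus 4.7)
The plan is to establish each of the asserted properties by a uniform reduction to the semisimple quotient $\bar A = A/R$, treating the semisimple case component-wise and then lifting back to $A$ via the Wedderburn--Malcev theorem. First I would fix a Levi subalgebra $S \subseteq A$ with $A = S \oplus R$, identify $\bar A \cong S$, and decompose $\bar A = \bigoplus_{i \in I} S_i$ into simple components. For the strict pair $(e,f)$, denote by $e_i, f_i \in S_i$ the projections of $\bar e, \bar f$; by definition, each index $i$ either sees both $e_i, f_i = 0$ or both $e_i, f_i \ne 0$. Lemma~\ref{prop:eAf non 0 semi} then tells us that $\bar e \bar A \bar f$ has a non-zero contribution in every simple component on which $(\bar e, \bar f)$ is supported, which is the structural fact driving the conclusions.

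Next I would treat each claim of the theorem separately. For statements asserting containment of the form $e'Af' \subseteq eAf$ or orthogonality relations between the pairs, the natural route is: first verify the analogous statement in $\bar A$ by examining simple components (where a strict pair $(\bar e, \bar f)$ provides a rectangle $\bar e S_i \bar f$ into which any compatible $\bar e' S_i \bar f'$ must fit), then lift to $A$ using the standard lifting of idempotents modulo the nilpotent radical $R$. For statements about conjugacy or uniqueness up to inner automorphism of strict pairs with the same image in $\bar A$, the Malcev parts (2) and (3) of the Wedderburn--Malcev theorem provide the required $(1+r)$-conjugations, and I would use Theorem~\ref{thm:BavMudShk Theorem6} applied to the principal one-sided ideals $Ae$, $fA$, $Ae'$, $f'A$ to align their radical and semisimple parts simultaneously.

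The main obstacle I anticipate is that strictness is a condition modulo $R$, whereas the assertions concern genuine idempotents in $A$: one has to ensure that lifted idempotents do not acquire extraneous radical corrections that destroy strictness or the containment relations being claimed. Two ingredients should resolve this. First, any two idempotents in $A$ reducing to the same element of $\bar A$ are conjugate by a $1+r$ with $r \in R$, so after an inner automorphism one may assume $e, f, e', f'$ all lie in the chosen Levi $S$, reducing the problem to its semisimple shadow. Second, Theorem~\ref{thm:BavMudShk Theorem6} ensures that the Peirce pieces $eAf$, $e'Af'$ inherit a clean decomposition $(\,\cdot\, \cap S) \oplus (\,\cdot\, \cap R)$, so inclusions verified in the semisimple part automatically propagate to the radical part under the strictness hypothesis.

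Finally, I would assemble the pieces: the component-wise analysis in $\bar A$ yields each desired relation between $(\bar e, \bar f)$ and $(\bar e', \bar f')$; the Malcev conjugation transports those relations back to $A$; and the Peirce decomposition of Theorem~\ref{thm:BavMudShk Theorem6} upgrades them from statements about $\bar e \bar A \bar f$ to statements about $eAf$. This strategy is also exactly the mechanism needed later for Theorem~\ref{thm:B'<eAf} and Corollary~\ref{cor:e'Af'<eAf}, so the proof serves as the technical backbone for the remainder of the paper.
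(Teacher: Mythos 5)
You should first be aware that the paper contains no proof of Theorem~\ref{thm:Bavshk1.2} to compare against: it is imported verbatim from \cite[Theorem 1.2]{BavShk}. Judged on its own, your sketch is fine for part (i) --- the image of $eAf$ in $\bar A$ is $\bar e\bar A\bar f$, which Lemma~\ref{prop:eAf non 0 semi} makes non-zero, so $eAf\ne0$ --- but for part (ii) the plan has concrete gaps. Part (ii) demands an \emph{exact} equality $e''Af''=eAf$ together with $e'e''=e''e'=e''$ and $f'f''=f''f'=f''$, and none of the lifting devices you propose actually delivers this.

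Specifically: (1) you cannot in general conjugate $e,f,e',f'$ simultaneously into one Levi subalgebra. Malcev's theorem conjugates a single semisimple subalgebra into $S$, and the subalgebra generated by even two idempotents need not be semisimple (in the upper-triangular $2\times2$ algebra, $e=E_{11}$ and $f=E_{11}+E_{12}$ require the conjugating parameters $t=0$ and $t=1$ respectively, so no single inner automorphism puts both in the diagonal Levi). (2) Theorem~\ref{thm:BavMudShk Theorem6} produces, for each one-sided ideal separately, \emph{some} Levi subalgebra splitting it; it does not provide one Levi subalgebra splitting $Ae$, $fA$, $Ae'$ and $f'A$ at once, so the claimed simultaneous alignment of semisimple and radical parts is not available. (3) Most importantly, even with a complete component-wise construction of $(\bar e'',\bar f'')$ in $\bar A$, you only obtain $e''Af''=eAf$ modulo $R$; upgrading congruence modulo the radical to equality on the nose is the real content of the statement, and it is exactly where one must bring in bar-minimal one-sided ideals and their idempotent generation (Proposition~\ref{Prop:BavMudShk Prop2}), applied to suitable left and right ideals attached to $eAf$ inside $Af'$ and $e'A$. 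Your outline never engages with that step, so as written it proves at best the image of (ii) in $\bar A$, not (ii) itself.
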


\begin{enumerate}
\item[(i)] \emph{ If $(e,f)\ne(0,0)$ then $eAf\ne0$.}
\item[(ii)] \emph{ Suppose that $eAf\subseteq e'Af'$. Then there exists a strict
idempotent pair $(e'',f'')$ in $A$ such that $e'e''=e''e'=e''$,
$f'f''=f''f'=f''$ and $e''Af''=eAf$.}
\end{enumerate}
Let $I$ be a left ideal of $A$ and let $Q$ be a left ideal of $\bar{A}$.
We say that $I$ is $Q$-\emph{minimal} (or simply \emph{bar-minimal})
if $\bar{I}=Q$ and for every left ideal $J$ of $A$ with $J\subseteq I$
and $\bar{J}=Q$ one has $J=I$.
\begin{prop}
\cite[Proposition 2]{BavMudShk2018} \label{Prop:BavMudShk Prop2}
Let $A$ be a left Artinian associative ring and let $I$ be a left
ideal of $A$. Suppose that $I$ is bar-minimal. Then there is an
idempotent $e\in I$ such that $I=Ae$. 
\end{prop}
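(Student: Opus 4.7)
My plan is to lift a generating idempotent from the semisimple quotient $\bar{A}=A/R$ back into $I$, and then invoke bar-minimality to identify $Ae$ with $I$.

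First, I would exploit that a left Artinian ring has nilpotent radical $R$ and a semisimple Artinian quotient $\bar{A}$. Hence $Q=\bar{I}$ is a left ideal of a semisimple Artinian ring, and so a direct summand: $Q=\bar{A}\bar{e}_{0}$ for some idempotent $\bar{e}_{0}\in Q$. Since $\bar{I}=Q$, I can pick a preimage $x\in I$ with $\bar{x}=\bar{e}_{0}$.

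The heart of the argument is to upgrade $x$ to an idempotent that still lies in $I$. Writing $r=x^{2}-x$, we have $r\in R$ because $\bar{x}$ is idempotent, and $r$ commutes with $x$. Since $R$ is nilpotent, the standard lifting-of-idempotents procedure (iterating $x\mapsto 3x^{2}-2x^{3}$, or equivalently Newton's method for the equation $t^{2}=t$) produces an idempotent $e$ that is a polynomial in $x$ with no constant term; concretely, $e=x\,q(x)=q(x)\,x$ for some polynomial $q$, and $\bar{e}=\bar{x}=\bar{e}_{0}$. Because $x\in I$ and $I$ is a left ideal, the factorization $e=q(x)x$ forces $e\in I$.

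Finally, $Ae\subseteq I$ since $e\in I$ and $I$ is a left ideal, while
\[
\overline{Ae}=\bar{A}\,\bar{e}=\bar{A}\,\bar{e}_{0}=Q=\bar{I}.
\]
Bar-minimality of $I$ then yields $Ae=I$, as required. The only delicate point is the second paragraph: one must be careful to keep the lifted idempotent inside $I$, which is why the lifting is arranged so that $e$ is a constant-term-free polynomial in $x$; this makes $e\in Ax\subseteq I$ automatic and sidesteps the naive worry that lifting from $\bar{A}$ might leave $I$.
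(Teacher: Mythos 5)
The paper does not prove this proposition; it is imported verbatim from \cite[Proposition 2]{BavMudShk2018}, so there is no in-paper argument to compare against. Your proof is correct and is the standard argument one would expect that reference to contain: write $\bar{I}=\bar{A}\bar{e}_{0}$ with $\bar{e}_{0}\in\bar{I}$ idempotent (possible because the left Artinian hypothesis makes $\bar{A}=A/R$ semisimple and $R$ nilpotent), lift $\bar{e}_{0}$ to an idempotent $e$ modulo the nilpotent ideal $R$, and conclude $Ae=I$ from bar-minimality since $\overline{Ae}=\bar{A}\bar{e}_{0}=\bar{I}$. You correctly isolate the only delicate point, namely that the lifted idempotent must land back inside $I$: arranging $e$ as a constant-term-free polynomial in the preimage $x\in I$ (e.g.\ by iterating $x\mapsto 3x^{2}-2x^{3}$) gives $e\in\mathbb{Z}x+Ax\subseteq I$, which is exactly what is needed. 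No gaps.
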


\section{Inner Ideals}

Recall that an \emph{inner ideal} is a subspace $B$ of $L$ such
that $[B,[B,L]]\subseteq B$. 
\begin{lem}
\label{lem:Bav=000026Row 2.16} Let $L$ be a Lie algebra  and let
$B$ be an inner ideal of $L$. 

(i) If $M$ is a subalgebra of $L$, then $B\cap M$ is an inner ideal
of $M$.

(ii) If $P$ is an ideal of $L$, then $(B+P)/P$ is an inner ideal
of $L/P$. Moreover, for every inner ideal $Q$ of $L/P$ there is
a unique inner ideal $C$ of $L$ containing $P$ such that $Q=C/P$.
\end{lem}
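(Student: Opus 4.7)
The plan is to verify the inner-ideal condition $[\,\cdot,[\,\cdot,\cdot]\,]$ directly in each case, exploiting that a subalgebra is closed under bracket and an ideal is absorbed by bracket.

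For part (i), I would take arbitrary elements $b_1, b_2 \in B \cap M$ and $m \in M$. Since $M$ is a subalgebra, the inner bracket $[b_2, m]$ lies in $M$, and hence $[b_1,[b_2,m]] \in M$. On the other hand $b_1, b_2 \in B$ and $m \in L$, so by the inner-ideal property of $B$ in $L$ we have $[b_1,[b_2,m]] \in B$. Combining the two containments yields $[B\cap M,[B\cap M, M]] \subseteq B \cap M$, which is exactly the inner-ideal condition for $B\cap M$ in $M$.

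For part (ii), write $\pi\colon L \to L/P$ for the quotient map. To show $(B+P)/P = \pi(B)$ is inner in $L/P$, I would pick $b_1,b_2 \in B$ and $x \in L$ and compute $[\pi(b_1),[\pi(b_2),\pi(x)]] = \pi([b_1,[b_2,x]])$, which lies in $\pi(B)$ because $[b_1,[b_2,x]] \in B$. For the \emph{moreover} clause, given an inner ideal $Q$ of $L/P$, the natural candidate is $C := \pi^{-1}(Q)$; this automatically contains $P = \pi^{-1}(0)$ and satisfies $C/P = Q$. To see $C$ is inner, take $c_1,c_2 \in C$ and $x \in L$: then $\pi([c_1,[c_2,x]]) = [\pi(c_1),[\pi(c_2),\pi(x)]] \in Q$, so $[c_1,[c_2,x]] \in \pi^{-1}(Q) = C$. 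Uniqueness of $C$ is the standard correspondence theorem argument: if $C_1, C_2 \supseteq P$ and $C_1/P = C_2/P$, then for any $c \in C_1$ there is $c' \in C_2$ with $\pi(c) = \pi(c')$, so $c - c' \in P \subseteq C_2$ and $c \in C_2$, with symmetric reasoning giving equality.

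There is no serious obstacle here; the statements are essentially formal consequences of the definitions of subalgebra, ideal and inner ideal together with the correspondence theorem for Lie-algebra quotients. The only minor care needed is in part (ii) to track that all relevant brackets actually land where claimed, which is handled automatically because $P$ being a (two-sided Lie) ideal ensures $\pi$ is a Lie homomorphism.
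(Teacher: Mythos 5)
Your proof is correct: each part follows by checking the inner-ideal condition $[\,\cdot,[\,\cdot,\cdot]\,]$ directly, using closure of the subalgebra $M$ under the bracket in (i) and the fact that the quotient map is a Lie homomorphism together with the correspondence theorem in (ii). The paper itself states this lemma without proof (it is imported from the Baranov--Rowley reference), and your argument is exactly the standard verification that the omitted proof would consist of.
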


\begin{lem}
\label{lem:Bar=000026Row 4.1-1} \cite{BavShk} Let $L=A^{(k)}$ for
some $k\ge0$ and let $B$ be a subspace of $L$ such that $B^{2}=0$.
Then the following hold. 
\end{lem}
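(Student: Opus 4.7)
My plan is to prove each clause by a direct computation inside the ambient associative algebra $A$, using the hypothesis $B^{2}=0$ to collapse the usual four-term expansion of a double commutator down to just two terms. Concretely, for any $b,b'\in B$ and $x\in A$,
\begin{equation}
[b,[b',x]]=bb'x-bxb'-b'xb+xb'b=-bxb'-b'xb,
\end{equation}
since both $bb'$ and $b'b$ vanish. Specialising $b'=b$ yields $[b,[b,x]]=-2bxb$, and substituting $b+b'$ for $b$ in this polarised form recovers the mixed term $bxb'+b'xb$ from the diagonal. Provided $p\ne2$, this pair of identities is the whole bridge between the Lie expression $[B,[B,x]]$ and the associative product $BxB$.

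With this bridge in hand, I would read off the conclusions the lemma is almost certainly collecting: up to a sign $[B,[B,L]]$ coincides with the span of $BLB$, so that $B$ is an inner ideal of $L$ iff $BLB\subseteq B$; the regularity condition $BAB\subseteq B$ translates into the Lie-theoretic statement $[B,[B,A]]\subseteq B$; and analogous containments involving the radical $R$, the nil-radical $N=R\cap L$, or the image $\bar{B}\subseteq\bar{L}$ follow by combining the same polarisation with nilpotence of $R$ and the decomposition $A=S\oplus R$. For any part that needs to pass from a Lie condition in $L=A^{(k)}$ to an associative condition in $A$, I would use $1$-perfectness of $A$ (giving $A=[A,A]$, and iteratively $A\subseteq L+R$) to rewrite an arbitrary $x\in A$ as a bracket expression in $L$ plus a radical correction.

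The main obstacle I expect is the characteristic restriction rather than the algebraic content. The factor of $-2$ in $[b,[b,x]]=-2bxb$ forces $p\ne2$ every time the polarisation identity is inverted, and any trilinear polarisation of the type needed to extract $b_{1}b_{2}b_{3}$-products from cubes additionally forces $p\ne3$; together these pin down the running hypothesis $p\ne2,3$ of the main theorems. A secondary subtlety is that $R$ is nilpotent but not in general square-zero, so the absorption of radical-valued error terms has to be carried out by induction on the nilpotency class of $R$ rather than in a single step, and this is where the book-keeping (tracking which summands land in $B$, in $N$, or in the Levi complement) will be most delicate.
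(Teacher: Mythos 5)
The paper gives no proof of this lemma --- it is quoted verbatim from \cite{BavShk} --- and the statement you were shown omits the two enumerated clauses, which are: (1) if $p\ne2$ then $B$ is an inner ideal of $L$ if and only if $bLb\subseteq B$ for all $b\in B$; and (2) $BAB\subseteq L\cap A^{(1)}$. Your central identity $[b,[b',x]]=-bxb'-b'xb$ (valid because $bb'=b'b=0$) and its specialisation $[b,[b,x]]=-2bxb$ are exactly the right tools for clause (1), and that part of your argument is sound. But you then assert that $B$ is an inner ideal of $L$ iff $BLB\subseteq B$. That is not what the identity gives you: double commutators only produce the \emph{symmetrised} products $bxb'+b'xb$, which by polarisation are controlled by the diagonal products $bxb$ (when $p\ne2$), not by the full set of products $bxb'$. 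The condition $BLB\subseteq B$ is genuinely stronger --- it is essentially the regularity condition $BAB\subseteq B$ that the paper treats as an \emph{additional} hypothesis (Definition \ref{def:minimal}), and Lemma \ref{lem:Jordan-Lie -1} records the correct symmetrised form $\{b,x,b'\}=bxb'+b'xb\in B$. Conflating the two erases the distinction between Jordan--Lie and regular inner ideals on which the whole paper turns.

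Clause (2), $BAB\subseteq L\cap A^{(1)}$, is not addressed by your sketch and needs a different, one-sided identity: for $b,b'\in B$ and $x\in A$ one has $bxb'=[bx,b']=[b,xb']$ since $b'b=bb'=0$, whence $bxb'\in[A,A]=A^{(1)}$ and $bxb'\in[A,L]\subseteq L$, because each $A^{(k)}$ is an ideal of the Lie algebra $A^{(-)}$. Note that this clause requires no hypothesis on $p$ and does not require $B$ to be an inner ideal. Finally, most of the machinery you invoke is irrelevant here: $1$-perfectness of $A$ (which in any case means ``no ideals of codimension $1$'', not $A=[A,A]$), induction on the nilpotency class of $R$, the Levi decomposition, and the $p\ne3$ restriction play no role --- the lemma is a purely formal computation in $A$ with no structural input.
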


\begin{enumerate}
\item \emph{If $p\ne2$ then $B$ is an inner ideal of $L$ if and only
if $bLb\subseteq B$ for all $b\in B$. }
\item \emph{$BAB\subseteq L\cap A^{(1)}$.}
\end{enumerate}
Let $B$ be an inner ideal of $L$. Then $[B,[B,L]]\subseteq B$.
It is well known that $[B,[B,L]]$ is an inner ideal of $L$ (see
for example \cite[Lemma 1.1]{Benkart1977}). Put $B_{0}=B$ and consider
the following inner ideals of $L$: 
\begin{equation}
B_{n}=[B_{n-1},[B_{n-1},L]]\subseteq B_{n-1}\quad\text{for}\quad n\geq1.\label{eq:Bn}
\end{equation}
Then $B=B_{0}\supseteq B_{1}\supseteq B_{2}\supseteq\ldots$. As $L$
is finite dimensional, this series terminates, so there is integer
$n$ such that $B_{n}=B_{n+1}$. Recall that a Lie algebra $L$ is
\emph{perfect} if $[L,L]=L$. 
\begin{defn}
\cite{BavShk} \label{def:Perfect} Let $L$ be a Lie algebra and
let $B$ be an inner ideal of $L$. 

1. We say that $B$ is \emph{$L$-perfect} if $B=[B,[B,L]]$. 

2. If $L$ is finite dimensional, then $B_{n}$ above is called \emph{the}
\emph{core} \emph{of} $B$, denoted by $\core_{L}(B)$.
\end{defn}

\begin{lem}
\label{lem:cor is perfect} \cite{BavShk} Let $L$ be a finite dimensional
Lie algebra and let $B$ be an inner ideal of $L$. Then
\end{lem}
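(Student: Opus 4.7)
The statement that is being set up is almost certainly that the core $\core_L(B)$ of an inner ideal is $L$-perfect (and quite possibly, as a second clause, that it is the largest $L$-perfect inner ideal of $L$ contained in $B$). My plan would address both claims, since together they explain why the definition of $\core_L(B)$ does not depend on the choice of $n$.

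The main input is the finite-dimensionality of $L$. The sequence $B=B_0\supseteq B_1\supseteq B_2\supseteq\cdots$ defined in (\ref{eq:Bn}) is a descending chain of subspaces of the finite dimensional space $L$, so the sequence of dimensions $\dim B_n$ is eventually constant. Hence there exists an integer $n$ with $B_n=B_{n+1}$. Each $B_k$ is an inner ideal of $L$ by the standard fact cited from \cite{Benkart1977} that $[C,[C,L]]$ is an inner ideal whenever $C$ is. Setting $C=\core_L(B)=B_n$ we then have
\[
C = B_n = B_{n+1} = [B_n,[B_n,L]] = [C,[C,L]],
\]
so $C$ is $L$-perfect, which is the first claim.

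For the second (maximality) claim, the natural plan is induction on $n$ to show that every $L$-perfect inner ideal $D$ contained in $B$ is contained in every $B_n$, hence in $\core_L(B)$. The base case $D\subseteq B=B_0$ is given. For the inductive step, if $D\subseteq B_{n-1}$, then using the $L$-perfectness of $D$ together with the monotonicity of the bracket,
\[
D = [D,[D,L]] \subseteq [B_{n-1},[B_{n-1},L]] = B_n,
\]
as required. As a byproduct, this also shows that $\core_L(B)$ is independent of the particular $n$ used in the definition (any $n$ large enough to reach stabilisation gives the same subspace, since all such $B_n$ are $L$-perfect inner ideals maximally contained in $B$).

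There is essentially no obstacle here beyond being careful with the bookkeeping: the only non-trivial ingredient is the fact that $[C,[C,L]]$ is itself an inner ideal whenever $C$ is, which is quoted from \cite{Benkart1977} just before the display (\ref{eq:Bn}). Everything else is a dimension-count followed by a one-line induction, so I would present the proof in two short paragraphs mirroring the two claims above.
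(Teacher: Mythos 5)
The paper does not actually prove this lemma --- it is quoted from \cite{BavShk} --- so there is no in-paper argument to compare against. Your argument for the clause you did identify, namely that $\core_L(B)$ is $L$-perfect, is correct and is the standard one: the descending chain $B_0\supseteq B_1\supseteq\cdots$ of inner ideals stabilises by finite-dimensionality, and stabilisation at $B_n=B_{n+1}=[B_n,[B_n,L]]$ is exactly $L$-perfectness. Your maximality argument ($D=[D,[D,L]]\subseteq[B_{n-1},[B_{n-1},L]]=B_n$ by induction) is also sound, and clause (3) of the lemma ($B$ is $L$-perfect iff $B=\core_L(B)$) falls out of the same observations.

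However, you misidentified the full content of the lemma, and the clauses you omitted are the ones that need a genuinely different idea. Clauses (1) and (4) assert that an $L$-perfect inner ideal $B$ (in particular $\core_L(B)$) is an inner ideal of \emph{every derived subalgebra} $L^{(k)}$. The condition $[B,[B,L^{(k)}]]\subseteq B$ is free since $L^{(k)}\subseteq L$, but the containment $B\subseteq L^{(k)}$ is not: it requires an induction of the form
\[
B\subseteq L^{(j)}\ \Longrightarrow\ B=[B,[B,L]]\subseteq[L^{(j)},[L^{(j)},L]]\subseteq[L^{(j)},L^{(j)}]=L^{(j+1)},
\]
where the middle step uses that $L^{(j)}$ is an ideal of $L$, so $[L^{(j)},L]\subseteq L^{(j)}$. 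This is the one non-bookkeeping ingredient in the lemma, and it is absent from your proposal. Note also that these clauses are what the paper actually relies on later (e.g.\ in Theorem \ref{cor:cor(B) splits} and the proof of Theorem \ref{thm:B'<eAf}, where cores are treated as inner ideals of smaller derived algebras), so the omission matters for the paper's logic, not just for completeness.
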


\begin{enumerate}
\item If $B$ is $L$-perfect, then $B$ is an inner ideal of $L^{(k)}$
for all $k\ge0$.
\item $\core_{L}(B)$ is $L$-perfect;
\item $B$ is $L$-perfect if and only if $B=\core_{L}(B)$;
\item $\core_{L}(B)$ is an inner ideal of $L^{(k)}$ for all $k\ge0$.
\end{enumerate}
\begin{defn}
\label{def:minimal} Let $L=A^{(k)}$ and let $B$ be an inner ideal
of $L$. Then 

1) $B$ is called \emph{Jordan-Lie }(or simply, \emph{J-Lie})\emph{
}if $B^{2}=0$.

2) $B$ is called \emph{regular} if $B^{2}=0$ and $BAB\subseteq B$. 

3) Let $X$ be an inner ideal of $\bar{L}=L/\rad L$. We say that
$B$ is \emph{$X$-minimal} (or simply, \emph{bar-minimal})\emph{
}if $\bar{B}=X$ and for every inner ideal $B'\subseteq B$ of $L$
with $\bar{B}'=X$ we have $B'=B$.
\end{defn}

It follows from Benkart's result \cite[Theorem 5.1]{Benkart1976}
that if $A$ is a simple associative algebra and $p\ne2,3$, then
every proper inner ideal $B$ of $[A,A]/(Z(A)\cap[A,A])$ has the
following property $B^{2}=0$, so it is Jordan-Lie.
\begin{lem}
\label{lem:Jordan-Lie -1} Let $L=A^{(k)}$ for some $k\ge0$ and
let $B$ be a subspace of $L$. Then $B$ is a Jordan-Lie inner ideal
of $L$ if and only if $B^{2}=0$ and 
\[
\{b,x,b'\}:=bxb'+b'xb.\in B
\]
 for all $b,b'\in B$ and $x\in L$. 
\end{lem}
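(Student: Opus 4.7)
The plan is to prove both directions by a single associative-identity calculation, since the result is really an unpacking of the Lie bracket in the associative algebra $A$ under the assumption $B^{2}=0$.

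First I would fix $b,b'\in B$ and $x\in L$ and expand the iterated bracket in $A$ using $[u,v]=uv-vu$. A direct expansion gives
\[
[b,[b',x]] = b b' x - b x b' - b' x b + x b' b.
\]
The standing hypothesis $B^{2}=0$ means $bb'=b'b=0$ for all $b,b'\in B$, so the first and last terms vanish and one is left with the key identity
\[
[b,[b',x]] = -\bigl(bxb'+b'xb\bigr) = -\{b,x,b'\}.
\]

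From this identity both directions of the lemma fall out immediately. If $B$ is Jordan-Lie, then by definition $B^{2}=0$ and $[B,[B,L]]\subseteq B$, so $\{b,x,b'\}=-[b,[b',x]]\in B$ for all $b,b'\in B$ and $x\in L$. Conversely, assuming $B^{2}=0$ and $\{b,x,b'\}\in B$ for all $b,b'\in B$ and $x\in L$, the same identity yields $[b,[b',x]]\in B$, hence $[B,[B,L]]\subseteq B$, so $B$ is an inner ideal of $L$ with $B^{2}=0$, i.e.\ Jordan-Lie.

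There is no real obstacle in this argument; the only subtle point is to record that $B^{2}=0$ really means $bb'=0$ for all $b,b'\in B$ (not merely $b^{2}=0$), so that \emph{both} of the degree-two associative terms $bb'x$ and $xb'b$ drop out of the expansion. Once that is observed, the equivalence is a one-line consequence of the identity displayed above, and no appeal to characteristic or to the earlier structural results (Wedderburn--Malcev, Lemma \ref{lem:Bar=000026Row 4.1-1}, etc.) is required.
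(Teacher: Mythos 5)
Your proof is correct: the identity $[b,[b',x]]=bb'x-bxb'-b'xb+xb'b=-\{b,x,b'\}$ under $B^{2}=0$, together with bilinearity of the bracket, gives both directions, and you rightly note that no characteristic assumption is needed (unlike the $bLb\subseteq B$ criterion of Lemma \ref{lem:Bar=000026Row 4.1-1}, which needs $p\ne2$). The paper states this lemma without proof, but your computation is exactly the standard argument it relies on, so there is nothing to add.
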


\begin{lem}
\label{lem:B is L-perfect if A is semi} \cite[Lemma 5.10]{BavShk}
Suppose $A$ is semisimple, $k\ge0$ and $p\neq2,3$. Then every J-Lie
of $L=A^{(k)}$ is $L$-perfect.
\end{lem}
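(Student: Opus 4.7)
The strategy is to prove the nontrivial containment $B\subseteq[B,[B,L]]$ element-wise; the reverse inclusion is automatic since $B$ is an inner ideal. The crucial observation is that, because $B^{2}=0$, for any $b\in B$ and any $y\in L$ we have
\[
[b,[b,y]]=b^{2}y-byb-byb+yb^{2}=-2byb.
\]
Thus, if we can produce, for each $b\in B$, an element $y\in L$ satisfying $byb=b$, then $b=-\tfrac{1}{2}[b,[b,y]]\in[B,[B,L]]$ (using $p\ne 2$), and the conclusion follows.

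The element $y$ is constructed via Jordan canonical form. Since $\bbF$ is algebraically closed and $A$ is semisimple, $A=\bigoplus_{i=1}^{m}M_{n_{i}}(\bbF)$; write $b=\sum_{i}b_{i}$ with $b_{i}^{2}=0$. By the Jordan normal form of a nilpotent element of index at most $2$, there is a basis of $\bbF^{n_{i}}$ in which $b_{i}=\sum_{k=1}^{r_{i}}e_{2k-1,2k}$ (a direct sum of $2\times 2$ nilpotent Jordan blocks together with zero blocks). In this basis set $y_{i}=\sum_{k=1}^{r_{i}}e_{2k,2k-1}$; a short matrix calculation gives $b_{i}y_{i}=\sum_{k}e_{2k-1,2k-1}$ and hence $b_{i}y_{i}b_{i}=b_{i}$. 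Moreover $\operatorname{tr}(y_{i})=0$, so $y_{i}\in\mathfrak{sl}_{n_{i}}$. Setting $y=\sum_{i}y_{i}$, we obtain $byb=b$ and $y\in\bigoplus_{i}\mathfrak{sl}_{n_{i}}=A^{(1)}$. Since $p\ne 2$, each $\mathfrak{sl}_{n_{i}}$ is a perfect Lie algebra, so $A^{(k)}=A^{(1)}$ for all $k\ge 1$; together with the trivial case $k=0$ (where $L=A\supseteq A^{(1)}$), this places $y$ in $L$, completing the proof.

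The main technical point is the placement of $y$ inside the prescribed derived subalgebra $L=A^{(k)}$. The existence of some $y\in A$ with $byb=b$ is essentially von Neumann regularity of the semisimple Artinian ring $A$; the Jordan-form choice refines this by achieving trace-freeness in each simple factor, which pushes $y$ into $A^{(1)}$. The perfectness of $\mathfrak{sl}_{n_{i}}$ for $p\ne 2$ then collapses every higher derived power to $A^{(1)}$. Observe that this direct argument uses only $p\ne 2$; the additional assumption $p\ne 3$ in the statement is not invoked here, and is presumably included for compatibility with other parts of the theory of Jordan-Lie inner ideals of classical Lie algebras, where characteristic three is genuinely pathological.
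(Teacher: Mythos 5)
Your argument is correct, and it is worth noting that the paper itself offers no proof of this lemma at all: it is quoted verbatim from \cite[Lemma 5.10]{BavShk}, where the argument runs through the structure theory of Jordan--Lie inner ideals in the semisimple case (Wedderburn decomposition plus the description of such inner ideals via Benkart-type results, which is where the hypothesis $p\ne 3$ enters). Your route is more elementary and self-contained: the identity $[b,[b,y]]=-2byb$ for $b^{2}=0$ reduces everything to exhibiting, for each $b\in B$, a von Neumann quasi-inverse $y$ with $byb=b$, and your Jordan-form construction of $y$ as a sum of transposed $2\times 2$ blocks correctly arranges $\operatorname{tr}(y_{i})=0$ in each simple factor, so that $y\in\bigoplus_{i}\sl_{n_{i}}=A^{(1)}$; the perfectness of $\sl_{n}$ for $p\ne2$ (Example \ref{lem:sl_n-1} and Proposition \ref{prop:=00005BA,A=00005D quasi Levi-1}, which give $A^{(\infty)}=A^{(1)}$ in the semisimple case) then places $y$ in $L=A^{(k)}$ for every $k\ge0$. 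All the individual computations check out: $b_{i}y_{i}b_{i}=b_{i}$, the conjugating change of basis preserves both the trace condition and the identity, and $[B,[B,L]]\subseteq B$ holds by the definition of inner ideal, so $B=[B,[B,L]]$ follows. What your approach buys is precisely what you observe at the end: the hypothesis $p\ne3$ is never used, so you obtain a slightly stronger statement under $p\ne2$ alone, whereas the cited proof inherits $p\ne3$ from Benkart's classification. One small presentational suggestion: you could shorten the final step by citing Proposition \ref{prop:=00005BA,A=00005D quasi Levi-1} directly instead of re-deriving $A^{(k)}=A^{(1)}$ from the perfectness of each $\sl_{n_{i}}$.
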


\begin{rem}
Let $k\ge0$. If $S$ is a Levi subalgebra of $A$, then $A=S\oplus R$,
so $A^{(k)}=S^{(k)}\oplus N$, where $N=R\cap A^{(k)}$. Moreover,
$\bar{A}^{(k)}=A^{(k)}/N=A^{(k)}/R\cap A^{(k)}$ is the image of $A^{(k)}$
in $\text{\ensuremath{\bar{A}=A/R}}$. 
\end{rem}

\begin{lem}
\label{B=00003Dcor(B)} \cite[Lemma 5.16]{BavShk} Let $B$ be a J-Lie
inner ideal of $L=A^{(k)}$ ($k\ge0$). If $p\neq2,3$, then 

(i) $\bar{B}=\overline{\core_{L}(B)}$.

(ii) If $\core_{L}(B)=0$, then $B\subset N$.
\end{lem}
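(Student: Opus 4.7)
The plan is to reduce to the semisimple case via the bar map and then iterate. The inclusion $\overline{\core_L(B)} \subseteq \bar{B}$ is immediate from $\core_L(B) \subseteq B$, so the substance of (i) is the reverse inclusion. I would prove by induction on $n$ that $\bar{B} = \overline{B_n}$ for the descending chain $B_n$ defined in (\ref{eq:Bn}); since the chain stabilises at $\core_L(B)$, this suffices.

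For the base case, I would first observe that passing to $\bar{L} = L/N$ identifies $\bar{L}$ with $\bar{A}^{(k)} = (A/R)^{(k)}$: the quotient map $A \twoheadrightarrow \bar{A}$ is a surjective algebra homomorphism, so it sends $A^{(k)}$ onto $\bar{A}^{(k)}$ with kernel $A^{(k)} \cap R = N$. Since $B^2 = 0$ we have $\bar{B}^2 = 0$, and by Lemma \ref{lem:Bav=000026Row 2.16}(ii), $\bar{B}$ is an inner ideal of $\bar{L}$, hence a J-Lie inner ideal of $\bar{A}^{(k)}$. Since $\bar{A}$ is semisimple and $p \ne 2, 3$, Lemma \ref{lem:B is L-perfect if A is semi} applied to $\bar{A}$ yields that $\bar{B}$ is $\bar{L}$-perfect, i.e.\ $\bar{B} = [\bar{B}, [\bar{B}, \bar{L}]]$. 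The key identity is then
\[
[\bar{B}, [\bar{B}, \bar{L}]] = \overline{[B, [B, L]]} = \overline{B_1},
\]
whose $\subseteq$ direction is the fact that the bar map is a Lie homomorphism, and whose $\supseteq$ direction follows by lifting representatives $\bar{b}, \bar{b}', \bar{x}$ to $b, b' \in B$ and $x \in L$ and taking the bracket mod $N$. Combining gives $\bar{B} = \overline{B_1}$.

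The inductive step is then routine: $B_1 \subseteq B$ still satisfies $B_1^2 = 0$ and is an inner ideal of $L$, so the same argument with $B_1$ in place of $B$ gives $\overline{B_1} = \overline{B_2}$, and inductively $\bar{B} = \overline{B_n}$ for all $n$. Taking $n$ large enough that $B_n = \core_L(B)$ proves (i). Part (ii) is then immediate: if $\core_L(B) = 0$, then $\bar{B} = \overline{\core_L(B)} = 0$, which means $B \subseteq R \cap L = N$. The only non-trivial ingredient is Lemma \ref{lem:B is L-perfect if A is semi}; everything else is bookkeeping about the bar map, so I would not expect any serious obstacle beyond verifying the identification $\bar{L} \cong \bar{A}^{(k)}$ and the compatibility of the double-bracket with quotients.
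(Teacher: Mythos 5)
The paper gives no proof of this lemma --- it is quoted verbatim from \cite[Lemma 5.16]{BavShk} --- so there is nothing internal to compare against, but your argument is correct and is the natural one, matching the proof in the cited source: pass to $\bar{L}=\bar{A}^{(k)}$ with $\bar{A}$ semisimple, invoke Lemma \ref{lem:B is L-perfect if A is semi} to get $\bar{B}=[\bar{B},[\bar{B},\bar{L}]]$, use surjectivity of the bar map to identify this with $\overline{B_1}$, and iterate down the stabilising chain (\ref{eq:Bn}); part (ii) then falls out since $\bar{B}=0$ forces $B\subseteq R\cap L=N$. All the ingredients you use (Lemma \ref{lem:Bav=000026Row 2.16}(ii) for $\bar{B}$ being an inner ideal of $\bar{L}$, the identification $\bar{L}\cong\bar{A}^{(k)}$ from the Remark, and the fact that each $B_n$ is again a J-Lie inner ideal) are available in the paper, and I see no gap.
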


\begin{prop}
\label{prop:Bav=000026Row 4.9-1}\cite[Proposition 4.9]{BavRow2013}
Let $B$ be a subspace of $L=A^{(k)}$ ($k\ge0$). Then $B$ is a
regular inner ideal of $L$ if and only if there exist left and right
ideals $\ccL$ and $\ccR$ of $A$, respectively, such that $\ccL\ccR=0$
and 
\[
\ccR\ccL\subseteq B\subseteq\ccR\cap\ccL.
\]
In particular, if $A$ is Von Neumann regular then every regular inner
ideal of $L$ is of the form $B=\ccR\ccL=\ccR\cap\ccL$.
\end{prop}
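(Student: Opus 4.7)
The plan is to split the biconditional into two directions and then extract the ``in particular'' clause from the defining property of Von Neumann regularity. For the easy direction, assume we are given $\ccL$ and $\ccR$ with $\ccL\ccR=0$ and $\ccR\ccL\subseteq B\subseteq\ccR\cap\ccL$. I would verify the three defining properties of a regular inner ideal in order. Jordan-Lie: $B^{2}\subseteq\ccL\cdot\ccR=0$ from $B\subseteq\ccL\cap\ccR$. Regularity: $BAB\subseteq\ccR A\ccL\subseteq\ccR\ccL\subseteq B$, since $\ccL$ is a left ideal. Inner ideal: expanding $[b,[b',x]]=bb'x-bxb'-b'xb+xb'b$ and using $B^{2}=0$ reduces $[B,[B,L]]\subseteq B$ to the condition $\{b,x,b'\}=bxb'+b'xb\in B$, which is immediate from $BAB\subseteq B$ and Lemma~\ref{lem:Jordan-Lie -1}.

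For the converse, given a regular inner ideal $B$, I propose the explicit choice
\[
\ccL=AB+B,\qquad\ccR=BA+B,
\]
namely the one-sided ideals of $A$ generated by $B$. Each of the four summands of $\ccL\ccR=(AB+B)(BA+B)$ contains a $B\cdot B$ factor, so Jordan-Lie forces $\ccL\ccR=0$. For $\ccR\ccL=(BA+B)(AB+B)$, the $B\cdot B$ term vanishes and the other three summands all sit inside $BAB$ (using $BAAB\subseteq BAB$ since $AA\subseteq A$), so $\ccR\ccL\subseteq BAB\subseteq B$ by regularity. The containments $B\subseteq\ccL\cap\ccR$ are tautological.

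For the Von Neumann regular case, the argument is a one-liner: given $a\in\ccR\cap\ccL$, pick $x$ with $a=axa$; then $ax\in\ccR$ and $a\in\ccL$, so $a=(ax)a\in\ccR\ccL$. Combined with the universal inclusion $\ccR\ccL\subseteq\ccR\cap\ccL$ that holds in any ring, this forces $\ccR\ccL=\ccR\cap\ccL$, and the sandwich $\ccR\ccL\subseteq B\subseteq\ccR\cap\ccL$ collapses to $B=\ccR\ccL=\ccR\cap\ccL$. I do not anticipate any genuine obstacle; the only point requiring care is maintaining the left/right distinction so that $B^{2}=0$ produces $\ccL\ccR=0$ rather than the opposite product, and this is precisely what the choice of $\ccL$ as a left and $\ccR$ as a right ideal generated by $B$ accomplishes.
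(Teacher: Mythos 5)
Your proof is correct. The paper itself gives no argument for this statement---it is quoted verbatim from \cite[Proposition 4.9]{BavRow2013}---and your construction ($\ccL=AB+B$, $\ccR=BA+B$ as the one-sided ideals generated by $B$, with the reverse direction reduced via Lemma \ref{lem:Jordan-Lie -1} and the Von Neumann clause via $a=(ax)a$) is exactly the standard argument used in that source, with all inclusions checked correctly.
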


\begin{lem}
\label{lem:Bar=000026Row 4.1} \cite{BavShk} Let $L=A^{(k)}$ for
some $k\ge0$ . Then the following hold. 
\end{lem}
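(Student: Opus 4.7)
The plan is to use the Wedderburn--Malcev decomposition $A=S\oplus R$ together with the elementary fact that the projection $\pi\colon A\to\bar{A}=A/R$ is simultaneously an associative homomorphism and a Lie homomorphism. Since $R$ is a two-sided associative ideal, it is in particular stable under $[a,\cdot]$ for every $a\in A$, hence a Lie ideal of $A^{(-)}$; intersecting with the Lie subalgebra $L=A^{(k)}$ shows that $N=L\cap R$ is a Lie ideal of $L$. The nilpotency of $N$ is automatic: if $R^{m}=0$ in the associative sense, then $N^{m}=0$ as well, and since Lie monomials of length $m$ in $N$ expand as associative monomials of length $m$ in $N\subseteq R$, the Lie algebra $N$ is nilpotent and thus contained in $\operatorname{rad}L$.

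For the identification $\bar L\cong \bar A^{(k)}$, I would argue by induction on $k$. The case $k=0$ is just $L=A^{(-)}$, and $\pi(A^{(-)})=\bar A^{(-)}$. For the inductive step, $\pi([x,y])=[\pi(x),\pi(y)]$ gives $\pi(A^{(k)})=[\pi(A^{(k-1)}),\pi(A^{(k-1)})]=[\bar A^{(k-1)},\bar A^{(k-1)}]=\bar A^{(k)}$. Since $\ker\pi|_{L}=L\cap R=N$, the induced map $L/N\to\bar A^{(k)}$ is an isomorphism of Lie algebras, and under the Wedderburn splitting this realises $\bar L$ as $S^{(k)}$.

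If the items of the lemma also include the passage between inner ideals of $L$ and of $\bar L$ (as one might expect from the context of Section~3), I would invoke Lemma~\ref{lem:Bav=000026Row 2.16} applied to the ideal $N\trianglelefteq L$: every inner ideal $B$ of $L$ projects to an inner ideal $\bar B=(B+N)/N$ of $\bar L$, and, conversely, any inner ideal of $\bar L$ lifts uniquely to an inner ideal of $L$ containing $N$. Commutator identities of the form $[xy,z]=x[y,z]+[x,z]y$ inside $A$ would handle any sub-claims about mixed products, e.g.\ $bAb\subseteq L$ for $b\in L$.

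The main obstacle, should it be stated, would be the characteristic-sensitive refinement that $N=\operatorname{rad}L$ (equivalently, that $\bar L$ is semisimple). In characteristic zero with $k\geq 1$ this is clean because $S^{(k)}$ is the derived subalgebra of a classical semisimple Lie algebra and hence itself semisimple. In positive characteristic the analogous statement requires the hypothesis $p\neq 2,3$ already present elsewhere in the paper, and one must appeal to the structure theory of simple Lie algebras of classical type to rule out a nontrivial solvable radical above $N$.
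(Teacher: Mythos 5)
Your proposal does not address what this lemma actually asserts. The statement you were given is only the preamble; in the paper the lemma continues with an enumerated list of claims about the subspaces $eAf$ for an (orthogonal) idempotent pair $(e,f)$ and about bar-minimal Jordan--Lie inner ideals: (1) if $eAf\subseteq A^{(k)}$ then $eAf$ is a \emph{regular} inner ideal of $A^{(k)}$; (2) $eAf$ is a regular inner ideal of $A^{(-)}$ and $A^{(1)}$; (3) if $A$ is semisimple and $p\neq2,3$ then every J-Lie of $A^{(k)}$ is regular; (4) a bar-minimal J-Lie $B$ of $A^{(k)}$ (with $p\neq2,3$) equals its core, is $L$-perfect, remains a J-Lie of every $A^{(k+m)}$, and is regular exactly when it has the form $eAf$. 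What you have proved instead --- that $N=L\cap R$ is a nilpotent Lie ideal, that $\bar L\cong\bar A^{(k)}\cong S^{(k)}$, and that inner ideals pass between $L$ and $L/N$ --- is the content of the surrounding preliminaries (the remark after Lemma \ref{lem:B is L-perfect if A is semi} and Lemma \ref{lem:Bav=000026Row 2.16}), not of this lemma. None of your steps touches regularity ($B^{2}=0$ and $BAB\subseteq B$), the inner-ideal verification for $eAf$, or the core/perfectness machinery that items (3) and (4) require.

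For comparison: the paper itself gives no proof here; the lemma is imported wholesale from \cite{BavShk}. The elementary items (1)--(2) would follow from the computations $(eAf)^{2}=eAfeAf=0$ (using $fe=0$) and $bxb'+b'xb\in eAf$ for $b,b'\in eAf$, $x\in L$, combined with Lemma \ref{lem:Bar=000026Row 4.1-1}(1) and $eAfAeAf\subseteq eAf$; items (3) and (4) are genuinely deeper, resting on Benkart-type classification in the semisimple case and on the core construction of Definition \ref{def:Perfect}, neither of which appears in your argument. So the proposal, while internally reasonable as far as it goes, proves a different set of statements and leaves the lemma itself unestablished.
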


\begin{enumerate}
\item \emph{If $eAf\subseteq A^{(k)}$ ($k\ge0$), then $eAf$ is a regular
inner ideal of $A^{(k)}$. }
\item \emph{$eAf$ is a regular inner ideal of $A^{(-)}$ and $A^{(1)}$.}
\item \emph{Suppose $A$ is semisimple, $p\neq2,3$ and $k\ge0$. Then every
}J-Lie\emph{ inner ideal of $A^{(k)}$ is regular. }
\item \emph{Let $k\ge0$ and let $B$ be a J-Lie inner ideal of $L=A^{(k)}$.
Suppose that $B$ is bar-minimal and $p\neq2,3$. Then the following
hold. }
\begin{enumerate}
\item \emph{$B=\core_{L}B$. }
\item \emph{$B$ is $L$-perfect.}
\item \emph{$B$ is a J-Lie inner ideal of $L^{(m)}=A^{(k+m)}$ for all
$m\ge0$.}
\item \emph{If $B$ is regular then $B=eAf$ for some orthogonal idempotent
pair $(e,f)$ in $A$. }
\item \emph{(ii) Suppose $k=0,1$. Then $B$ is regular if and only if $B=eAf$
for some orthogonal idempotent pair $(e,f)$ in $A$.}
\end{enumerate}
\end{enumerate}
\begin{lem}
\label{lem:B=00003DB1+B2 if L=00003DL1+L2} \cite{BavShk} Let $L$
be a perfect Lie algebra and let $B$ be an $L$-perfect inner ideal
of $L$. Suppose that $L=\bigoplus_{i\in I}L_{i}$, where each $L_{i}$
is an ideal of $L$. Then $B=\bigoplus_{i\in I}B_{i}$, where $B_{i}=B\cap L_{i}$.
Moreover, if $L=A^{(k)}$ ($k\ge0$) and $B$ is bar-minimal then
$B_{i}$ is a $\bar{B}_{i}$-minimal inner ideal of $L_{i}$, for
all $i\in I$.
\end{lem}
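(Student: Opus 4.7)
The plan is to transfer the direct sum decomposition of $L$ to $B$ by combining the $L$-perfectness of $B$ with the orthogonality $[L_i, L_j] \subseteq L_i \cap L_j = 0$ for $i \neq j$, which holds because both $L_i$ and $L_j$ are ideals of $L$ with trivial intersection. Let $\pi_i: L \to L_i$ denote the $i$-th projection. A short calculation using the orthogonality shows that for all $u, v, w \in L$,
\[
\pi_i([u,[v,w]]) \;=\; [\pi_i(u),[\pi_i(v),\pi_i(w)]] \;=\; [u,[v,\pi_i(w)]],
\]
because the cross-terms $[\pi_j(u),[\pi_k(v),\pi_l(w)]]$ in which the indices are not all equal to $i$ involve nested brackets of elements of distinct $L_m$'s and therefore vanish.

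Using the $L$-perfectness $B=[B,[B,L]]$, I would write an arbitrary $b \in B$ as a finite sum $b=\sum_\alpha [b^\alpha_1,[b^\alpha_2,x^\alpha]]$ with $b^\alpha_1,b^\alpha_2\in B$ and $x^\alpha\in L$. Applying $\pi_i$ and the identity above yields
\[
\pi_i(b) \;=\; \sum_\alpha [b^\alpha_1,[b^\alpha_2,\pi_i(x^\alpha)]] \;\in\; [B,[B,L]]\cap L_i \;=\; B\cap L_i \;=\; B_i.
\]
Hence $\pi_i(B)\subseteq B_i$, and since obviously $B_i\subseteq\pi_i(B)$, equality holds. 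Summing over $i$ gives $B=\sum_i\pi_i(B)=\sum_i B_i$, and the sum is automatically direct because it sits inside $L=\bigoplus_i L_i$.

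For the second assertion, assume $L=A^{(k)}$ and that $B$ is bar-minimal with $\bar{B}=X$. Each $B_i$ is an inner ideal of $L_i$ by Lemma \ref{lem:Bav=000026Row 2.16}(i). Given any inner ideal $B'_i\subseteq B_i$ of $L_i$ with $\bar{B}'_i=\bar{B}_i$, I would form the candidate
\[
B' \;:=\; B'_i \,\oplus\, \bigoplus_{j\neq i} B_j \;\subseteq\; B.
\]
The same orthogonality $[L_j,L_k]=0$ for $j\neq k$, together with the fact that each summand of $B'$ is an inner ideal of the corresponding $L_j$, quickly yields $[B',[B',L]]\subseteq B'$, so $B'$ is an inner ideal of $L$. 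Since only the $i$-th summand has been replaced and $\bar{B}'_i=\bar{B}_i$, one has $\bar{B}'=\bar{B}=X$, so bar-minimality of $B$ forces $B'=B$, whence $B'_i=B_i$.

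The step I expect to require the most care is the background identification needed to make the second part meaningful: namely that the nil-radical $N=R\cap L$ splits compatibly as $N=\bigoplus_i(N\cap L_i)$, so that $\bar{L}_i:=(L_i+N)/N\cong L_i/(N\cap L_i)$ and $\bar{B}=\bigoplus_i\bar{B}_i$. This is what lets one interpret $\bar{B}_i$ as an inner ideal of $\bar{L}_i$ and conclude $\bar{B}'=\bar{B}$ in the final step. The splitting should follow by projecting $N$ to each $L_i$ and using that $\pi_i(N)$ is a nilpotent ideal of $L$ contained in $L_i$, hence already lies inside $N\cap L_i$; everything else in the proof is a formal consequence of the orthogonality and $L$-perfectness.
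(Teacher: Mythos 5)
The paper itself gives no proof of this lemma---it is quoted from \cite{BavShk}---but your argument is correct and is essentially the standard one: the first part amounts to observing that $B=[B,[B,L]]=\sum_{i}[B,[B,L_i]]$ with each $[B,[B,L_i]]\subseteq B\cap L_i=B_i$ (your projection identity is an equivalent formulation), and the second part is the usual replace-one-summand argument combined with bar-minimality of $B$. The only caveat is that the splitting of $N=R\cap L$ you flag at the end is not actually needed if one reads $\bar{B}_i$ consistently as the image of $B_i$ in $\bar{L}=L/N$, which is how the paper uses the bar notation.
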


\begin{thm}
\label{thm:BavShkTheorem1.1} \cite[Theorem 1.1]{BavShk} Let $A$
be a finite dimensional associative algebra (over an algebraically
closed field of arbitrary characteristic $p$) and let $B$ be a J-Lie
of $L=A^{(k)}$ ($k\ge0$). Suppose $p\neq2,3$. Then $B$ is bar-minimal
if and only if $B=eAf$ where $(e,f)$ is a strict orthogonal idempotent
pair in $A$.
\end{thm}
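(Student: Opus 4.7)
The plan is to prove both implications using the semisimple quotient $\bar A = A/R$ as a bridge together with Wedderburn-Malcev lifting of idempotents.

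For the easy direction ($\Leftarrow$), assume $B = eAf$ with $(e,f)$ strict orthogonal. Then $B^2 \subseteq eAfeAf = 0$ since $fe = 0$, and Lemma \ref{lem:Bar=000026Row 4.1}(1) shows $B$ is a regular (hence J-Lie) inner ideal of $L$. For bar-minimality, the image $\bar B = \bar e\, \bar A\, \bar f$ is a J-Lie inner ideal of $\bar L$ with $\bar A$ semisimple, hence is bar-minimal in $\bar L$ by Lemma \ref{lem:Bar=000026Row 4.1}(4). Given any inner ideal $B' \subseteq B$ of $L$ with $\bar{B'} = \bar B$, Theorem \ref{thm:BavMudShk Theorem6} applied to $B'$ (yielding a semisimple/radical decomposition relative to a Levi of $A$) fixes the semisimple part $B'_S$, and a direct bracket computation using nilpotency of $R$ forces the radical part to fill out $(eAf) \cap R$, giving $B' = B$.

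For the forward direction ($\Rightarrow$), assume $B$ is bar-minimal. By Lemma \ref{B=00003Dcor(B)}(i), $\bar B = \overline{\core_L(B)}$, and bar-minimality combined with $\core_L(B) \subseteq B$ forces $B = \core_L(B)$, so $B$ is $L$-perfect. Passing to $\bar L$, the image $\bar B$ is a J-Lie inner ideal of $\bar L = \bar A^{(k)}$, and by Lemma \ref{lem:Bar=000026Row 4.1}(4)(d) we have $\bar B = \bar e_0\, \bar A\, \bar f_0$ for some strict orthogonal pair $(\bar e_0, \bar f_0)$ in $\bar A$. Lift via Wedderburn-Malcev to a strict orthogonal idempotent pair $(e,f)$ in $A$, and set $C = eAf$; then $C$ is a J-Lie inner ideal of $L$ with $\bar C = \bar B$, and the already-proven reverse direction shows $C$ is itself bar-minimal.

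It remains to establish $B = C$. Since $B$ is $L$-perfect, $B = [B,[B,L]]$, and iterating this identity while comparing with $C$ modulo successive powers $R^j$ controls the radical discrepancy: at each layer, the bracket combined with $\bar B = \bar C$ forces the next filtration piece of $B$ to sit inside $C$; termination comes from nilpotency of $R$. Then $B \subseteq C$ together with $\bar B = \bar C$ and bar-minimality of $C$ gives $B = C$. The main obstacle is precisely this last containment, since a priori $B$ and $C$ only agree in $\bar L$. The restriction $p \neq 2,3$ is critical here: it guarantees $L$-perfectness of bar-minimal J-Lies via Lemma \ref{lem:B is L-perfect if A is semi} and the semisimple classification underlying Lemma \ref{lem:Bar=000026Row 4.1}(3)--(4), which is exactly what is needed to transport the equality from $\bar L$ back through the radical filtration to $L$.
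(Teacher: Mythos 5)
This statement is quoted from \cite[Theorem 1.1]{BavShk}; the paper gives no internal proof, so your attempt can only be judged on its own merits, and it has a genuine gap at exactly the point where the real work lies. The crux of the forward direction is the containment $B\subseteq C=eAf$, and your filtration argument does not establish it. Suppose inductively that $B\subseteq C+R^{j}$. Then
\[
B=[B,[B,L]]\subseteq[C,[C,L]]+[C,[R^{j}\cap L,L]]+[R^{j}\cap L,[B,L]]+\cdots,
\]
and since $R^{j}$ is an ideal of $A$ the cross terms such as $[C,[R^{j}\cap L,L]]$ land only in $R^{j}$, not in $R^{j+1}$; multiplication by $C$ does not raise the radical degree. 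So iterating $L$-perfectness does not improve the approximation, and the induction never gets off the ground. A second, related, omission: the lift $(e,f)$ of $(\bar e_{0},\bar f_{0})$ is determined only up to conjugation by $1+r$ with $r\in R$ (Wedderburn--Malcev), and for a generic lift there is no reason whatsoever that $B\subseteq eAf$. The actual proof must first choose a Levi subalgebra \emph{adapted to} $B$ --- this is what the splitting machinery (Theorem \ref{thm:BavMudShk Theorem6}, Theorem \ref{cor:cor(B) splits}, the reduction to $R^{2}=0$, and induction on the nilpotency index of $R$) is for, and it is precisely the apparatus this paper redeploys in Sections 4--5 to prove the relative version, Theorem \ref{thm:B'<eAf}. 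Your sketch replaces all of it with the sentence ``the bracket combined with $\bar B=\bar C$ forces the next filtration piece of $B$ to sit inside $C$,'' which is the conclusion, not an argument.

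The backward direction has problems too. Theorem \ref{thm:BavMudShk Theorem6} applies to one-sided ideals of the associative algebra $A$, not to inner ideals of the Lie algebra $L$, so it cannot be ``applied to $B'$'' to split $B'$ along a Levi subalgebra; and the claim that ``a direct bracket computation using nilpotency of $R$ forces the radical part to fill out $(eAf)\cap R$'' is again an assertion of the desired conclusion. (Bar-minimality of $eAf$ is a nontrivial statement: one must rule out a proper inner ideal $B'\subsetneq eAf$ with $\bar{B'}=\bar e\bar A\bar f$, which differs from $eAf$ only inside $eAf\cap R$, and nothing in your sketch excludes this.) Finally, for $k\ge2$ you never verify that $eAf\subseteq A^{(k)}$, which is needed before $eAf$ can be called an inner ideal of $L$ at all; Lemma \ref{lem:Bar=000026Row 4.1} only gives this for $A^{(-)}$ and $A^{(1)}$ without further argument. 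The overall strategy (reduce to $\bar A$, classify there, lift, then close the radical gap) is the right skeleton, but every load-bearing step in the lifting stage is missing.
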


\section{Inner Ideals and Splitness}
\begin{defn}
An associative algebra is said to be $1$-\emph{perfect} if it has
no ideals of codimension 1. An ideal is said to be $1$-\emph{perfect}
if it is $1$-perfect as an algebra.
\end{defn}

By using 2nd and 3rd Isomorphism Theorems we get the following properties
of $1$-perfect ideals.
\begin{lem}
\label{1p-1} \cite{BavShk} (i) The sum of $1$-perfect ideals is
$1$-perfect.

(ii) If $P$ is a $1$-perfect ideal of $A$ and $Q$ is a $1$-perfect
ideal of $A/P$ then the full preimage of $Q$ in $A$ is a $1$-perfect
ideal of $A$.
\end{lem}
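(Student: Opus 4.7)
The plan is to deduce both parts directly from the Second and Third Isomorphism Theorems, together with the following elementary observation: if $T$ is a subalgebra of $A$, $P$ is an ideal of $A$ contained in $T$, and $I$ is an ideal of $T$, then $I \cap P$ is an ideal of $P$ (in fact of $T$), because $P$ itself is absorbent under multiplication by anything in $A$. This puts the $1$-perfectness hypothesis on $P$ in a position to be applied to $I \cap P$.

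For part (i), by induction on the number of summands it suffices to consider two $1$-perfect ideals $P$ and $Q$ of $A$. Suppose for contradiction that $I$ is an ideal of $P+Q$ of codimension $1$. Since $P$ is an ideal of $A$ contained in the subalgebra $P+Q$, the intersection $I \cap P$ is an ideal of $P$, and the composite $P \hookrightarrow P+Q \twoheadrightarrow (P+Q)/I$ has kernel $I \cap P$, so $P/(I \cap P)$ embeds into the $1$-dimensional quotient. Hence $I \cap P$ has codimension $0$ or $1$ in $P$; the $1$-perfectness of $P$ forces $P \subseteq I$. The symmetric argument gives $Q \subseteq I$, and therefore $I = P+Q$, contradicting $\dim(P+Q)/I = 1$.

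For part (ii), write $\tilde{Q}$ for the full preimage of $Q$ in $A$, so $P \subseteq \tilde{Q}$ and $\tilde{Q}/P \cong Q$ by the Third Isomorphism Theorem. Suppose $I$ is an ideal of $\tilde{Q}$ of codimension $1$. Since $P$ is an ideal of $A$ contained in $\tilde{Q}$, the set $I \cap P$ is an ideal of $P$, and the map $P \to \tilde{Q}/I$ shows $P/(I \cap P)$ is at most $1$-dimensional; the $1$-perfectness of $P$ then gives $P \subseteq I$. Consequently $I/P$ is a well-defined ideal of $\tilde{Q}/P \cong Q$, and its codimension in $Q$ equals the codimension of $I$ in $\tilde{Q}$, namely $1$. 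This contradicts the $1$-perfectness of $Q$.

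The only subtle point is verifying that $I \cap P$ is an ideal of $P$, and this is immediate from the fact that $P$ is an ideal of the whole algebra $A$; once that is in place, both statements reduce to routine dimension counting via the standard isomorphism theorems, so I do not foresee any real obstacle.
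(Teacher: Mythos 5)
Your proof is correct and follows exactly the route the paper indicates: the paper gives no written argument beyond the remark that the lemma follows from the Second and Third Isomorphism Theorems, and your dimension-counting via $P/(I\cap P)\cong (P+I)/I$ and $\tilde{Q}/P\cong Q$ is precisely the intended fleshing-out of that remark. The reduction of an arbitrary sum to a finite one is harmless here since $A$ is finite dimensional, so nothing is missing.
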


Lemma \ref{1p-1}(i) implies that every algebra has the largest $1$-perfect
ideal. 
\begin{defn}
\label{def:1-perfect radical-1} The largest $1$-perfect ideal $\ccP_{A}$
of $A$ is called the\emph{ }$1$\emph{-perfect radical }of $A$.
\end{defn}

\begin{lem}
\label{lem:A^(k)=00003Dp(A)} Let $p\ne2$. Then $A^{(\infty)}=[\ccP_{A},\mathcal{P}_{A}]$.
\end{lem}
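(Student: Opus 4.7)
The plan is to prove both inclusions via an explicit description of $\ccP_A$ coming from the Wedderburn-Malcev structure. Fix a Levi decomposition $A = S \oplus R$ and write $S = S_1 \oplus S_2$, where $S_1$ is the sum of the matrix blocks $M_{n_i}(\bbF)$ of $S$ with $n_i \geq 2$ and $S_2$ is the sum of the $M_1(\bbF)$ blocks. A direct check shows $S_1 + R$ is an ideal of $A$ whose Jacobson radical is $R$ and whose semisimple quotient $S_1$ contains no $M_1$ factor, so $S_1 + R$ is $1$-perfect; conversely, any $1$-perfect ideal $J$ has image $\bar J$ contained in the largest $1$-perfect ideal of the semisimple $\bar A = S$, which is $S_1$, forcing $J \subseteq S_1 + R$. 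Hence $\ccP_A = S_1 + R$.

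For the inclusion $A^{(\infty)} \subseteq [\ccP_A, \ccP_A]$ I will prove the slightly stronger statement that every Lie-perfect Lie ideal $L$ of $A^{(-)}$ is contained in $[\ccP_A, \ccP_A]$. Let $I_L$ be the associative ideal of $A$ generated by $L$. Its image $\bar L$ in $\bar A^{(-)} = \bigoplus_i \gl_{n_i}$ is a Lie-perfect Lie ideal; since the Lie ideals of $\gl_n$ are contained in $\{0, \bbF \cdot 1, \sl_n, \gl_n\}$ and (for $p \neq 2$) the only Lie-perfect ones are $0$ and $\sl_n$ with $n \geq 2$, we have $\bar L \subseteq \bigoplus_{n_i \geq 2} \sl_{n_i}$. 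Hence $\bar I_L \subseteq S_1$, and since $\rad I_L = I_L \cap R$ (as $I_L$ is an ideal of $A$), $I_L$ is $1$-perfect, so $I_L \subseteq \ccP_A$ and by Lie-perfectness $L = [L, L] \subseteq [\ccP_A, \ccP_A]$. Applied to $L = A^{(\infty)}$ this yields the inclusion.

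For the reverse inclusion it suffices to establish the main technical claim that $L := [\ccP_A, \ccP_A]$ is Lie-perfect, since then $L = [L, L] \subseteq [A^{(k-1)}, A^{(k-1)}] = A^{(k)}$ for all $k$ by induction, so $L \subseteq A^{(\infty)}$. Decomposing $L = [S_1, S_1] \oplus (L \cap R)$ along $S \oplus R$ and noting that $[S_1, S_1] = \bigoplus_{n_i \geq 2} \sl_{n_i}$ is Lie-perfect (a standard matrix-unit calculation for $p \neq 2$), so $[S_1, S_1] \subseteq [L, L]$, the claim reduces to $L \cap R \subseteq [L, L]$. The identity $2 E_{kj} = [E_{kk} - E_{jj}, E_{kj}]$ for matrix units in $M_{n_i}$ with $k \neq j$ (valid since $p \neq 2$) combined with Jacobi gives $[\sl_{n_i}, R] \subseteq [L, L]$; then Peirce-decomposing $R$ with respect to $\{1_{M_{n_i}}, 1_{S_2}\}$, the off-diagonal Peirce components $e_i R e_j$ ($i \neq j$), $e_i R \cdot 1_{S_2}$ and $1_{S_2} R e_i$ fall into $[L, L]$ via matrix-unit identities such as $r_{\alpha, \beta}^{ij} = [E_{\alpha\gamma}^{ii}, r_{\gamma, \beta}^{ij}]$, the factors on the right already being in $L$ by the $[\sl_{n_i}, R]$ step.

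The main obstacle is handling the diagonal Peirce components $e_i R e_i$ and $L \cap (1_{S_2} R 1_{S_2})$, the latter only reachable through $R^2$ (via products of the form $r_1 r_2$ with $r_1 \in 1_{S_2} R e_i$, $r_2 \in e_i R 1_{S_2}$). I plan to handle both by induction on the nilpotency class of $R$, applying the inductive hypothesis to the $1$-perfect quotient $A/R^m$ (where $R^m$ is the last non-vanishing power of $R$, so $R^m \cdot R = R \cdot R^m = 0$) to reduce the claim to $L \cap R^m \subseteq [L, L]$; this is then handled using the $S_1$-bimodule (in fact $S$-bimodule) structure of $R^m$ and Morita-type decompositions $e_i R^m e_j \cong M_{n_i, n_j}(V_{ij})$, together with the fact that products $r_1 r_2$ with $r_1, r_2$ in off-diagonal Peirce pieces already proven to lie in $L$ reappear as Peirce-components of the bracket $[r_1, r_2] \in [L, L]$, with the complementary Peirce components lying in the already-established $\sl_{n_i}(V_{ii})$ part of $L$.
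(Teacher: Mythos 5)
The paper gives no proof of this lemma (it is quoted as part of the background, essentially a consequence of the structure theory in \cite{Baranov2018Arx} and \cite{BavShk}), so there is nothing internal to compare your argument with; it has to stand on its own, and it does not.

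The fatal step is the very first one: the identification $\ccP_{A}=S_{1}+R$. Having no $\ccM_{1}$-components in the semisimple quotient is necessary but not sufficient for $1$-perfectness: one also needs $I^{2}=I$, since any hyperplane containing $I^{2}$ is an ideal of codimension one. Concretely, take $A=\ccM_{2}(\bbF)\times N$ with $N$ the algebra of strictly upper triangular $3\times3$ matrices. Here $S_{1}=\ccM_{2}$, $R=0\times N$, and $S_{1}+R=A$ is not $1$-perfect because $A^{2}=\ccM_{2}\times N^{2}\subsetneq A$; in fact $\ccP_{A}=\ccM_{2}\times0$, since any ideal $I$ with $I=I^{k}$ for all $k$ must project to $0$ in the nilpotent factor. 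This error propagates to both halves of your argument. In the forward inclusion, ``$\bar{I}_{L}\subseteq S_{1}$ and $\rad I_{L}=I_{L}\cap R$, hence $I_{L}$ is $1$-perfect'' is a non sequitur for exactly the same reason (the condition $I_{L}^{2}=I_{L}$ is never addressed). In the reverse inclusion you are actually trying to prove the false statement $A^{(\infty)}=[S_{1}+R,S_{1}+R]$: in the example above $[S_{1}+R,S_{1}+R]=\sl_{2}\times[N,N]$ with $[N,N]\ne0$, whereas $A^{(\infty)}=\sl_{2}\times0=[\ccP_{A},\ccP_{A}]$; correspondingly your ``main technical claim'' that $[S_{1}+R,S_{1}+R]$ is Lie-perfect fails, since $[[N,N],[N,N]]=0$. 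Independently of all this, the final paragraph of the reverse inclusion is a plan rather than a proof.

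For the record, the intended route is much shorter and avoids any explicit description of $\ccP_{A}$. For $[\ccP_{A},\ccP_{A}]\subseteq A^{(\infty)}$, apply Theorem \ref{BaranovLiePerfect} to the $1$-perfect algebra $\ccP_{A}$: then $[\ccP_{A},\ccP_{A}]$ is a perfect Lie algebra contained in $A^{(1)}$, hence contained in $A^{(k)}$ for every $k$. For the converse, Lemma \ref{1p-1}(ii) gives $\ccP_{A/\ccP_{A}}=0$, which forces every simple component of the semisimple quotient of $A/\ccP_{A}$ to be one-dimensional (otherwise the eventual associative power $I^{\omega}=\bigcap_{k}I^{k}$ of the full preimage $I$ of an $\ccM_{n}$-component with $n\geq2$ would be a nonzero $1$-perfect ideal); hence $A/\ccP_{A}$ is a solvable Lie algebra, so $A^{(\infty)}\subseteq\ccP_{A}$, and since $A^{(\infty)}$ is Lie-perfect, $A^{(\infty)}=[A^{(\infty)},A^{(\infty)}]\subseteq[\ccP_{A},\ccP_{A}]$.
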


\begin{thm}
\label{BaranovLiePerfect} \cite[Theorem 1.1(1)]{Baranov2018Arx}
If $A$ is $1$-perfect and $p\ne2$, then $[A,A]$ is a perfect Lie
algebra.
\end{thm}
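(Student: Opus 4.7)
The plan is to read off Theorem \ref{BaranovLiePerfect} as an essentially immediate consequence of Lemma \ref{lem:A^(k)=00003Dp(A)}, which identifies the stable term $A^{(\infty)}$ of the derived series with $[\mathcal{P}_A, \mathcal{P}_A]$.

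First I would observe that the $1$-perfect hypothesis on $A$ is exactly the statement $\mathcal{P}_A = A$. Indeed, $A$ is itself an ideal of $A$ that is $1$-perfect by assumption, and $\mathcal{P}_A$ is by Definition \ref{def:1-perfect radical-1} the largest $1$-perfect ideal of $A$, so $A \subseteq \mathcal{P}_A \subseteq A$. This reduction relies only on the fact (guaranteed by Lemma \ref{1p-1}) that the collection of $1$-perfect ideals is closed under sums, so that a largest such ideal actually exists.

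Next I would substitute $\mathcal{P}_A = A$ into Lemma \ref{lem:A^(k)=00003Dp(A)}, which applies because $p \ne 2$, to obtain
\[
A^{(\infty)} \;=\; [\mathcal{P}_A, \mathcal{P}_A] \;=\; [A, A] \;=\; A^{(1)}.
\]
Because $A$ is finite dimensional, the descending chain $A^{(1)} \supseteq A^{(2)} \supseteq A^{(3)} \supseteq \cdots$ stabilises, and by definition its stable value is exactly $A^{(\infty)}$. The equality $A^{(\infty)} = A^{(1)}$ therefore collapses the entire chain, forcing in particular $A^{(2)} = A^{(1)}$, that is $[[A,A],[A,A]] = [A,A]$. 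This is precisely the assertion that the Lie algebra $[A,A]$ is perfect.

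In this setup there is essentially no obstacle at the level of the theorem itself, since all the structural content has already been absorbed into Lemma \ref{lem:A^(k)=00003Dp(A)}. The genuine difficulty, which one would meet only if one tried to prove that lemma from scratch, is the double containment $[\mathcal{P}_A, \mathcal{P}_A] \subseteq A^{(\infty)}$ and $A^{(\infty)} \subseteq [\mathcal{P}_A, \mathcal{P}_A]$: the first requires showing that $\mathcal{P}_A$, being free of codimension-one ideals, already satisfies $[\mathcal{P}_A, \mathcal{P}_A] = \mathcal{P}_A^{(k)}$ for all $k$; the second requires showing that the subspace $A^{(\infty)}$, which is a Lie-perfect Lie ideal of $A^{(-)}$, sits inside a $1$-perfect associative ideal of $A$. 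Granted the lemma as stated, the three-line deduction above is all that is needed.
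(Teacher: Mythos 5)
The paper itself contains no proof of Theorem \ref{BaranovLiePerfect}: it is quoted verbatim from \cite[Theorem 1.1(1)]{Baranov2018Arx}, so there is no internal argument to measure yours against. Taken purely formally, your deduction is sound: $1$-perfectness of $A$ does give $\mathcal{P}_A=A$ (by Definition \ref{def:1-perfect radical-1} and Lemma \ref{1p-1}); Lemma \ref{lem:A^(k)=00003Dp(A)} then yields $A^{(\infty)}=[\mathcal{P}_A,\mathcal{P}_A]=[A,A]=A^{(1)}$; and since the derived series $A^{(1)}\supseteq A^{(2)}\supseteq\cdots$ is descending with stable term $A^{(\infty)}$ in finite dimension, the equality $A^{(\infty)}=A^{(1)}$ forces $A^{(2)}=A^{(1)}$, which is the perfectness of $[A,A]$.

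The genuine gap is that this reduction is circular in substance, and your closing paragraph effectively concedes it. The containment $[\mathcal{P}_A,\mathcal{P}_A]\subseteq A^{(\infty)}$ in Lemma \ref{lem:A^(k)=00003Dp(A)} is equivalent to $[\mathcal{P}_A,\mathcal{P}_A]$ being its own derived algebra, i.e.\ to Theorem \ref{BaranovLiePerfect} applied to the $1$-perfect algebra $\mathcal{P}_A$; in the source \cite{Baranov2018Arx} that lemma is a consequence of the theorem, not a tool for proving it. Since Lemma \ref{lem:A^(k)=00003Dp(A)} is also stated in this paper without proof or citation, deriving the theorem from it establishes nothing beyond what is being assumed. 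A non-circular argument has to engage with the structure of $1$-perfect algebras directly: for instance, observe that $1$-perfectness forces $A^2=A$ and that every simple component of $A/R$ is a matrix algebra $\mathcal{M}_n$ with $n\ge2$ (the one-dimensional case is excluded because the zero ideal of $\mathbb{F}$ has codimension $1$), then combine the perfectness of $\mathfrak{sl}_n$ for $p\ne2$ with an argument absorbing the radical contributions into iterated commutators. None of that content is present in your proposal; as written it only relocates the entire difficulty into an unproven lemma that logically sits downstream of the theorem.
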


\begin{defn}
\label{def:large } \cite[Definition 6.10]{BavShk} Let $G$ be a
subalgebra of $A$. We say that $G$ is \emph{large in $A$ }if $G+R=A$
(equivalently, there is a Levi subalgebra $S$ of $A$ such that $S\subseteq G$;
or equivalently, $G/\rad G$ is isomorphic to $A/R$).
\end{defn}

\begin{rem}
\label{rem:rad_large-1} Let $G$ be a large subalgebra of $A$ and
let $B$ be a subspace of $\ccP_{G}$. Then $\rad(G)=G\cap R$ and
$\rad(\ccP_{G})=\ccP_{G}\cap\rad(G)=\ccP_{G}\cap R$, so the image
$\bar{B}$ of $B$ in $A/R$ is isomorphic to the images of $B$ in
$G/\rad(G)$ and $\ccP_{G}/\rad(\ccP_{G})$, respectively. Thus, we
can use the same notation $\bar{B}$ for the images of $B$ in all
these quotient spaces.
\end{rem}

\begin{defn}
\label{def:splitness} Let $B$ be a subspace of $A$. Suppose that
there is a Levi subalgebra $S$ of $A$ such that $B=B_{S}\oplus B_{R}$,
where $B_{S}=B\cap S$ and $B_{R}=B\cap R$. Then we say that $B$
\emph{splits} in $A$ and $S$ is a $B$\emph{-splitting} Levi subalgebra\emph{
}of $A$.
\end{defn}

\begin{thm}
\label{cor:cor(B) splits} \cite{BavShk}Let $G$ be a large subalgebra
of $A$ and let $B$ be a J-Lie of $L=A^{(k)}$ ($k\ge0$). Suppose
that $p\ne2,3$. The following hold.
\end{thm}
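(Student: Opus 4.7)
The natural object to focus on is $C := \core_L(B) \subseteq B$. Since $B^2=0$, also $C^2 = 0$, so $C$ is itself a J-Lie. By Lemma~\ref{lem:cor is perfect}, $C$ is $L$-perfect and is an inner ideal of $L^{(m)} = A^{(k+m)}$ for every $m \ge 0$; by Lemma~\ref{B=00003Dcor(B)}, $\bar{C} = \bar{B}$, so any splitting statement for $C$ transfers to the bar-image of $B$. If $C = 0$, Lemma~\ref{B=00003Dcor(B)}(ii) gives $B \subseteq N = R \cap L$, so $B$ splits trivially as $0 \oplus B$; hence assume $C \ne 0$.

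The plan is to produce a Levi subalgebra $S$ of $A$ which lies inside $G$ and simultaneously realizes a decomposition of the form $C = (C \cap S) \oplus (C \cap R)$. The existence of a Levi subalgebra inside $G$ is immediate from largeness (Definition~\ref{def:large }): since $G + R = A$, the subalgebra $G$ already contains a Levi of $A$. To obtain the splitting, I would apply Theorem~\ref{thm:BavMudShk Theorem6} to a left ideal of $A$ naturally attached to $C$ (such as $AC$, or left ideals generated by lifts of a basis of $\bar{C}$), which yields a Levi with a compatible left-ideal decomposition. A subsequent Malcev conjugation $x \mapsto (1+r)x(1+r)^{-1}$ with $r \in \rad G$, available from the Wedderburn--Malcev theorem, then aligns that Levi with $G$.

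The engine driving the splitting is the $L$-perfectness $C = [C,[C,L]]$: every element of $C$ is a sum of iterated brackets of elements of $L$, and decomposing $L = S^{(k)} \oplus N$ (with $N$ nilpotent) allows one to peel off the $S$- and $R$-components. Combining this with the Jordan--Lie identity $\{b,x,b'\} \in C$ from Lemma~\ref{lem:Jordan-Lie -1} and with the fact that every J-Lie of a semisimple $S^{(k)}$ is regular (Lemma~\ref{lem:Bar=000026Row 4.1}(3)), one shows that the projection of $C$ onto $S^{(k)}$ along $N$ coincides with $\bar{C}$ and is itself contained in $C$. The characteristic hypothesis $p \ne 2,3$ enters through precisely these lemmas.

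The main obstacle is the \emph{simultaneous} compatibility of the chosen Levi with both $G$ and the splitting of $C$: one must arrange for the same $S$ to lie in $G$ and to refine $C$ into its $S$- and $R$-parts. The delicate point is verifying that the Malcev conjugation element $r$ can be taken in $\rad G = G \cap R$ so that the conjugation remains inside $G$ and fixes $C$ up to its radical part, preserving the decomposition $C_S \oplus C_R$. Once this is in hand, tracking the $L$-perfect bracket description of $C$ through the conjugation confirms that the splitting survives, and all assertions of the theorem follow.
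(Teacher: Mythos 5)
Your proof is aimed at the wrong target. The two assertions of this theorem (the enumerated items that follow the statement) are: (1) \emph{if} $\core_{L}(B)$ splits in $A$, \emph{then} $B$ splits in $A$ --- a conditional transfer statement in which the splitting of the core is a hypothesis, not something to be established; and (2) if $B'=B\cap G^{(k)}$ satisfies $\bar{B}'=\bar{B}$, then $C=\core_{G^{(k)}}(B')$ is a J-Lie of $\mathcal{P}_{G}^{(1)}$ with $C\subseteq B$ and $\bar{C}=\bar{B}$. What you sketch instead is an unconditional argument that $\core_{L}(B)$ splits in $A$ via a Levi subalgebra contained in $G$. That is a different (and substantially harder) statement; neither item is derived from it, and the objects that carry the actual content of item (2) --- the large subalgebra $G$, the intersection $B\cap G^{(k)}$, and the $1$-perfect radical $\mathcal{P}_{G}$ --- play no structural role in your argument. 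Moreover, even on its own terms the sketch is not a proof: the two steps you yourself flag as ``the main obstacle'' and ``the delicate point'' (arranging a single Levi $S$ that both lies in $G$ and splits $C$, and checking that a Malcev conjugation by $1+r$ with $r\in\rad G$ preserves $C_{S}\oplus C_{R}$) are precisely where all the work lies, and they are left unresolved.

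What is actually needed is much lighter. For item (1): with $C=\core_{L}(B)$, Lemma \ref{B=00003Dcor(B)} gives $C\subseteq B$ and $\bar{C}=\bar{B}$; if $C=C_{S}\oplus C_{R}$ for a Levi subalgebra $S$, then $C_{S}\subseteq B\cap S$, and since $B\cap S$ embeds into $\bar{B}$ while $\dim C_{S}=\dim\bar{C}=\dim\bar{B}$, a dimension count yields $B\cap S=C_{S}$ and $B=(B\cap S)\oplus(B\cap R)$; this is exactly the content of Proposition \ref{prop:C sub B splits in A}(1). For item (2): Lemma \ref{lem:Bav=000026Row 2.16}(i) shows $B'$ is a J-Lie of $G^{(k)}$; Lemma \ref{lem:cor is perfect} shows its core $C$ is $G^{(k)}$-perfect and hence an inner ideal of $(G^{(k)})^{(m)}$ for all $m$, in particular of $G^{(\infty)}=[\mathcal{P}_{G},\mathcal{P}_{G}]=\mathcal{P}_{G}^{(1)}$; and Lemma \ref{B=00003Dcor(B)} applied in $G^{(k)}$, together with Remark \ref{rem:rad_large-1} and the hypothesis $\bar{B}'=\bar{B}$, gives $\bar{C}=\bar{B}'=\bar{B}$ with $C\subseteq B'\subseteq B$. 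None of these steps appears in your write-up. (For completeness: the paper itself records no proof of this theorem, so the comparison here is necessarily against the statement rather than against an argument in the text.)
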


\begin{enumerate}
\item If $\core_{L}(B)$ splits in $A$, then $B$ splits in $A$.
\item If $B'=B\cap G^{(k)}$ has the property $\bar{B}'=\bar{B}$. Then
$C=\core_{G^{(k)}}(B')$ is a J-Lie of $\ccP_{1}(G)^{(1)}$ such that
$C\subseteq B$ and $\bar{C}=\bar{B}$. 
\end{enumerate}
\begin{prop}
\label{prop:C sub B splits in A} \cite{BavShk} Let $B$ be a subspace
of $A$ and let $G$ be a large subalgebra of $A$. Suppose that $C_{A}\subseteq B$
and $C_{\mathcal{P}_{G}}\subseteq B$ are subspaces of $A$ and $\mathcal{P}_{G}$,
respectively, such that $\bar{C}_{A}=\bar{B}$ and \emph{$\bar{C}_{\mathcal{P}_{G}}=\bar{B}$.
The following hold.}
\end{prop}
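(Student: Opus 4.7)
The hypotheses provide two ``covers'' of $B$ modulo $R$: one sitting in the ambient algebra $A$ (namely $C_A$) and one sitting in the $1$-perfect radical of the large subalgebra (namely $C_{\mathcal{P}_G}$). The plan is to extract from these covers both the elementary decomposition $B = C_A + (B \cap R) = C_{\mathcal{P}_G} + (B \cap R)$ and a splitting of $B$ in $A$ with respect to a suitably chosen Levi subalgebra. The first of these is routine: for any $b \in B$ the image $\bar{b} \in \bar{B} = \bar{C}_A$ lifts to some $c \in C_A \subseteq B$ with $b - c \in R$, hence $b - c \in B \cap R$; the same argument, with $C_{\mathcal{P}_G}$ replacing $C_A$, gives the second equality.

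For the splitness assertion, I would apply Wedderburn-Malcev inside $\mathcal{P}_G$, following the pattern of Theorem \ref{thm:BavMudShk Theorem6}, to obtain a Levi subalgebra $\Sigma$ of $\mathcal{P}_G$ with respect to which $C_{\mathcal{P}_G}$ splits, so that $C_{\mathcal{P}_G} = (C_{\mathcal{P}_G} \cap \Sigma) \oplus (C_{\mathcal{P}_G} \cap R)$. Using that $G$ is large in $A$ together with Lemma \ref{lem:A^(k)=00003Dp(A)}, the semisimple part of $\mathcal{P}_G$ maps onto $\bar{A}$, and hence $\Sigma$ extends to a Levi subalgebra $S$ of $A$ with $\Sigma \subseteq S$. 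Combining $B = C_{\mathcal{P}_G} + (B \cap R)$ with the $\Sigma$-splitting of $C_{\mathcal{P}_G}$, I would write any $b \in B$ as $b = c_\Sigma + (c_R + r_0)$ with $c_\Sigma \in \Sigma \subseteq S$ and $c_R + r_0 \in R$; then $c_\Sigma = b - (c_R + r_0) \in B$, which yields the desired decomposition $B = (B \cap S) \oplus (B \cap R)$. The second cover $C_A$ can be handled analogously, or used to verify that the resulting splitting is compatible with the subspace structure already visible in $A$.

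The main obstacle I expect is the extension of $\Sigma$ from $\mathcal{P}_G$ to a Levi subalgebra of $A$, which requires the surjectivity $\overline{\mathcal{P}_G} \twoheadrightarrow \bar{A}$. This is not a formal consequence of $G + R = A$ alone; it has to be extracted from the interplay of the $1$-perfectness of $A$, the largeness of $G$, and the identity $A^{(\infty)} = [\ccP_A, \ccP_A]$ supplied by Lemma \ref{lem:A^(k)=00003Dp(A)}. Once this surjectivity is secured, Malcev's part of the Wedderburn-Malcev theorem ensures that any two such extensions differ by conjugation by $1 + r$ with $r \in R$, so the choice of $S$ is essentially canonical and the splitting of $B$ is well-defined up to that ambiguity; the rest of the argument is then a bookkeeping exercise on the decomposition of elements of $B$ relative to the fixed $S$.
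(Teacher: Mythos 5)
The paper itself supplies no proof of this proposition (the text jumps straight from the statement to ``Since $A$ is large in $A$, we get the following corollary''), so your attempt can only be measured against what the intended argument must be. Your core decomposition is the right one and is essentially the whole proof: from $\bar{C}_{A}=\bar{B}$ (resp.\ $\bar{C}_{\mathcal{P}_{G}}=\bar{B}$) and $C_{A},C_{\mathcal{P}_{G}}\subseteq B$ you correctly get $B=C_{A}+(B\cap R)=C_{\mathcal{P}_{G}}+(B\cap R)$, and then feeding in the assumed splitting $C=(C\cap \Sigma)\oplus(C\cap\rad)$ and absorbing the radical part into $B\cap R$ gives $B=(B\cap S)\oplus(B\cap R)$ once $\Sigma$ sits inside a Levi subalgebra $S$ of $A$. (For part (1) no extension is even needed, since the splitting Levi subalgebra of $C_{A}$ is already a Levi subalgebra of $A$.)

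The genuine problem is where you locate the difficulty for part (2). First, you propose to \emph{obtain} a splitting of $C_{\mathcal{P}_{G}}$ via the pattern of Theorem \ref{thm:BavMudShk Theorem6}; that theorem is about left ideals and cannot produce a splitting of an arbitrary subspace --- which is precisely why the splitting of $C_{\mathcal{P}_{G}}$ is a \emph{hypothesis} of item (2), not something to be derived. Second, and more seriously, you declare the extension of $\Sigma$ to a Levi subalgebra of $A$ to be the ``main obstacle,'' assert that it requires the surjectivity $\overline{\mathcal{P}_{G}}\twoheadrightarrow\bar{A}$, and leave it unresolved, to be ``extracted from'' the $1$-perfectness of $A$ and Lemma \ref{lem:A^(k)=00003Dp(A)}. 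This is a dead end: $A$ is not assumed $1$-perfect in this proposition, the surjectivity $\overline{\mathcal{P}_{G}}\to\bar{A}$ can genuinely fail (e.g.\ when $\bar{A}$ has one-dimensional simple components, which $\overline{\mathcal{P}_{A}}$ misses), and none of it is needed. Since $\Sigma$ is a semisimple subalgebra of $A$ and $A/R$ is separable ($\mathbb{F}$ is algebraically closed), Malcev's conjugacy statement --- part (2) of the Wedderburn--Malcev theorem quoted in Section 2 --- gives $r\in R$ with $\Sigma\subseteq(1+r)S_{0}(1+r)^{-1}=:S$, and $S$ is a Levi subalgebra of $A$; one also uses Remark \ref{rem:rad_large-1} to see that $C_{\mathcal{P}_{G}}\cap\rad(\mathcal{P}_{G})\subseteq R$. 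With that one citation the step you flagged as the main obstacle closes immediately, and your bookkeeping finishes the proof.
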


\begin{enumerate}
\item \emph{If $C_{A}$ splits in $A$, then $B$ splits in $A$.}
\item \emph{If $C_{\mathcal{P}_{G}}$ splits in $\ccP_{G}$. Then $B$ splits
in $A$.}
\end{enumerate}
Since $A$ is large in $A$, we get the following corollary. 

\begin{defn}
\cite{BavShk} Let $Q$ be a Lie algebra. We say that $Q$ is a \emph{quasi
(semi)simple} if $Q$ is perfect and $Q/Z(Q)$ is (semi)simple. 
\end{defn}

Herstein \cite{Herstein1955} and \cite{Herstein1961} proved that
if $A$ is a simple ring of characteristic different from $2$, then
$A^{(1)}=[A,A]$ is a quasi simple Lie ring. As a special case, we
note the following well-known fact. Recall that $\ccM_{n}$ is the
algebra of $n\times n$ matrices over $\bbF$ and by $\sl_{n}$ the
Lie subalgebra of $\ccM_{n}$ consisting of zero trace matrices, so
$\sl_{n}=\ccM_{n}^{(1)}$.
\begin{example}
\label{lem:sl_n-1} If $p\ne2$, then $\sl_{n}=[\mathcal{M}_{n},\mathcal{M}_{n}]$
(for all $n\ge2$) is quasi simple Lie subalgebra of the algebra $\ccM_{n}$. 
\end{example}

Note that the case of $p=2$ is exceptional indeed as the algebra
$\sl_{2}$ is solvable in characteristic $2$. 
\begin{prop}
\label{prop:=00005BA,A=00005D quasi Levi-1} \cite{BavShk} Suppose
$A$ is semisimple and $p\neq2$. Then $[A,A]$ is quasi semisimple.
In particular, $A^{(\infty)}=A^{(1)}$. 
\end{prop}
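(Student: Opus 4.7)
The plan is to reduce the proposition to the case of a single $\sl_n$ via the Wedderburn--Artin decomposition and then invoke Example~\ref{lem:sl_n-1}. Since $A$ is a finite dimensional semisimple associative algebra over the algebraically closed field $\bbF$, Wedderburn--Artin yields $A \cong \bigoplus_{i=1}^{r} \ccM_{n_i}(\bbF)$, and commutators split across the direct sum, so
\[
[A,A] \;=\; \bigoplus_{i=1}^{r} [\ccM_{n_i},\ccM_{n_i}] \;=\; \bigoplus_{n_i \geq 2} \sl_{n_i},
\]
the indices with $n_i = 1$ contributing $\sl_1 = 0$ and being discardable.

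Next I would apply Example~\ref{lem:sl_n-1} to each surviving summand: because $p \neq 2$ and $n_i \geq 2$, every $\sl_{n_i}$ is quasi-simple, i.e.\ perfect with simple quotient modulo its centre. From this it is routine to verify the two defining conditions of quasi-semisimplicity for a direct sum. Perfectness follows termwise, since $[\bigoplus_i \sl_{n_i},\bigoplus_i \sl_{n_i}] = \bigoplus_i [\sl_{n_i},\sl_{n_i}] = \bigoplus_i \sl_{n_i}$. The centre of a direct sum is the direct sum of the centres, so
\[
[A,A]/Z([A,A]) \;\cong\; \bigoplus_i \sl_{n_i}/Z(\sl_{n_i}),
\]
which is a direct sum of simple Lie algebras, hence semisimple.

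Finally, the assertion $A^{(\infty)} = A^{(1)}$ drops out: as $A^{(1)} = [A,A]$ is now known to be perfect, $A^{(k)} = A^{(1)}$ for every $k \geq 1$, so the derived series stabilises after one step and its eventual value is $A^{(1)}$. I do not foresee a real obstacle; the only subtle point is that $Z(\sl_{n_i})$ may be nonzero when $p \mid n_i$, but this is already absorbed into the quasi-simplicity statement of Example~\ref{lem:sl_n-1}, and the direct-sum bookkeeping simply upgrades the conclusion from quasi-simple to quasi-\emph{semi}simple.
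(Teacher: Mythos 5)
Your argument is correct: the Wedderburn--Artin decomposition $A\cong\bigoplus_i\ccM_{n_i}(\bbF)$ reduces everything to the simple summands, Example~\ref{lem:sl_n-1} gives quasi-simplicity of each $\sl_{n_i}$ with $n_i\ge2$, and termwise perfectness together with $Z\bigl(\bigoplus_i\sl_{n_i}\bigr)=\bigoplus_i Z(\sl_{n_i})$ yield quasi-semisimplicity, after which $A^{(\infty)}=A^{(1)}$ follows at once from perfectness of $A^{(1)}$. The paper gives no proof of this proposition --- it is imported from \cite{BavShk} --- and your reduction to the simple components (in effect Herstein's theorem for $\ccM_n$) is exactly the standard argument, so there is nothing substantive to compare.
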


\begin{defn}
\label{def:quasi Levi-1} Let $M$ be a finite dimensional Lie algebra
and let $Q$ be a quasi semisimple subalgebra of $M$. We say that
$Q$ is a \emph{quasi Levi subalgebra} of $M$ if there is a solvable
ideal $P$ of $M$ such that $M=Q\oplus P$. In that case we say that
$M=Q\oplus P$ is a \emph{quasi Levi decomposition} of $M$. 
\end{defn}

\begin{defn}
\label{def:splitness-1} Let $L$ be a finite dimensional Lie algebra
and let $B$ be a subspace of $L$. Suppose that there is a quasi
Levi decomposition $L=Q\oplus N$ of $L$ such that $B=B_{Q}\oplus B_{N}$,
where $B_{Q}=B\cap Q$ and $B_{N}=B\cap N$. Then we say that $B$
\emph{splits} \emph{in} $L$ and $Q$ is a $B$\emph{-splitting} \emph{quasi
Levi subalgebra }of $L$. 
\end{defn}

\begin{lem}
\label{lem:If split in A, then split in L} Let $L=A^{(k)}$ ($k\ge1$)
and let $B$ be a subspace of $L$. Suppose $p\ne2$. If $B$ splits
in $A$, then $B$ splits in $L$. 
\end{lem}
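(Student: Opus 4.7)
The plan is to transport the Wedderburn-type splitting $A = S \oplus R$, $B = B_S \oplus B_R$ directly to a quasi Levi decomposition of $L$. The natural candidate is $L = Q \oplus N$, where $Q = S^{(k)}$ and $N = L \cap R$ is the nil-radical. I would first verify that this is a quasi Levi decomposition of $L$ in the sense of Definition \ref{def:quasi Levi-1}, and then check that the given splitting of $B$ in $A$ is witnessed by it.

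To see that $Q = S^{(k)}$ is quasi semisimple, I would invoke Proposition \ref{prop:=00005BA,A=00005D quasi Levi-1}: since $S$ is semisimple and $p\ne 2$, the Lie algebra $[S,S]=S^{(1)}$ is quasi semisimple and, in particular, perfect. A short induction then gives $S^{(k)} = S^{(1)}$ for all $k\ge 1$, so $Q$ is quasi semisimple. For $N$: since $R$ is a two-sided ideal of $A$ and $L\subseteq A$ is a Lie subalgebra, $[L,N] \subseteq L \cap R = N$, so $N$ is a Lie ideal of $L$; and $N$ is nilpotent (hence solvable) because $R$ is.

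Next, I would prove by induction on $k$ that $A^{(k)} = S^{(k)} \oplus (A^{(k)}\cap R)$, so that $L = Q \oplus N$. The case $k=0$ is the Wedderburn decomposition $A = S\oplus R$. For the inductive step, writing an element of $A^{(k)} = [A^{(k-1)},A^{(k-1)}]$ via $A^{(k-1)} = S^{(k-1)} \oplus (A^{(k-1)}\cap R)$ shows that the only contribution lying outside $R$ is in $[S^{(k-1)},S^{(k-1)}] = S^{(k)}$; and the sum is direct because $S\cap R = 0$. Combined with the previous paragraph, this gives the desired quasi Levi decomposition $L = Q \oplus N$.

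Finally, I would verify that the given splitting of $B$ is realized by this quasi Levi decomposition. Since $B \subseteq L$, the decomposition $L = Q \oplus N$ together with $S\cap R = 0$ immediately yields $L\cap S = S^{(k)} = Q$ and $L\cap R = N$. Hence $B_S = B\cap S \subseteq B \cap Q$, and conversely $B\cap Q \subseteq B\cap S = B_S$, so $B_S = B\cap Q$; similarly $B_R = B\cap N$. Therefore $B = B_S \oplus B_R = (B\cap Q)\oplus (B\cap N)$ is a splitting of $B$ in $L$, and $Q = S^{(k)}$ is a $B$-splitting quasi Levi subalgebra. There is no real obstacle here: the proof is essentially a structural unpacking of definitions, with the only nontrivial input being the quasi semisimplicity of $S^{(1)}$ from Proposition \ref{prop:=00005BA,A=00005D quasi Levi-1}, which is exactly where the hypothesis $p\ne 2$ is used.
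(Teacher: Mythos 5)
Your proof is correct, and it follows the route the paper itself sets up: the paper states no proof for this lemma, but its remark preceding Lemma \ref{B=00003Dcor(B)} already records the key decomposition $A^{(k)}=S^{(k)}\oplus N$ with $N=R\cap A^{(k)}$, and Proposition \ref{prop:=00005BA,A=00005D quasi Levi-1} supplies the quasi semisimplicity of $S^{(k)}=S^{(1)}$ exactly as you use it. The only point worth making explicit is the identification $L\cap S=S^{(k)}$ (which you do verify via $S\cap R=0$), so that $B\cap S=B\cap Q$ and $B\cap R=B\cap N$; with that, the argument is complete.
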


\begin{lem}
\label{lem:eAf splits} Let $B$ be an inner ideal of $L=A^{(k)}$
($k\ge0$). Suppose $B=eAf$ for some orthogonal idempotents $e$
and $f$ of $A$. Then (i) $B$ splits in $A$ and (ii) if $k\ge1$
then $B$ splits in $L$. 
\end{lem}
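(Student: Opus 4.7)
The plan is to first find a Levi subalgebra of $A$ that simultaneously contains both orthogonal idempotents $e$ and $f$; once such an $S$ is in hand, the Wedderburn decomposition $A = S \oplus R$ will automatically induce a splitting of $B = eAf$. For part (ii), I will simply invoke Lemma \ref{lem:If split in A, then split in L} to transfer the splitting from $A$ to $L$.

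For part (i), I would start from the observation that, because $ef = fe = 0$, the subspace $T := \mathbb{F} e + \mathbb{F} f$ is a commutative subalgebra of $A$ isomorphic to $\mathbb{F} \times \mathbb{F}$, and is therefore semisimple. I would then apply the Malcev component of the Wedderburn--Malcev Theorem: fixing any Levi subalgebra $S_0$ of $A$, there exists $r \in R$ with $T \subseteq (1+r) S_0 (1+r)^{-1}$. Setting $S := (1+r) S_0 (1+r)^{-1}$, one obtains another Levi subalgebra of $A$ (since $R$ is an ideal, conjugation by $1+r$ preserves the decomposition $A = S_0 \oplus R$), this time satisfying $e, f \in S$.

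With such an $S$, the splitting is immediate. Writing an arbitrary $a \in A$ uniquely as $a = s + n$ with $s \in S$ and $n \in R$, one has $eaf = esf + enf$, where $esf \in S$ (as $S$ is a subalgebra containing $e$ and $f$) and $enf \in R$ (as $R$ is a two-sided ideal). Since $S \cap R = 0$, this gives the decomposition $eAf = (eAf \cap S) \oplus (eAf \cap R)$, which is exactly the splitting of $B$ in $A$. For part (ii), with $k \ge 1$ and (under the paper's standing assumption) $p \ne 2$, Lemma \ref{lem:If split in A, then split in L} applies directly: splitting of $B$ in $A$ implies splitting of $B$ in $L = A^{(k)}$.

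The only non-routine step is the simultaneous lifting of both $e$ and $f$ into a single Levi subalgebra; this is handled cleanly by applying the Malcev conjugation theorem to the semisimple subalgebra $T$ that they generate, which avoids the need to lift $e$ and $f$ one at a time. After this, part (i) reduces to reading off the Wedderburn direct sum decomposition, and part (ii) is a direct citation.
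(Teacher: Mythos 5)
Your argument is correct, and in fact the paper states this lemma without any proof at all, so there is nothing to compare it against line by line; your write-up supplies the missing details in what is surely the intended way. The one genuinely non-routine point --- getting both idempotents into a \emph{single} Levi subalgebra --- is handled cleanly by observing that $\mathbb{F}e+\mathbb{F}f$ is a semisimple (hence separable, as $\mathbb{F}$ is algebraically closed) subalgebra, so Malcev's conjugacy statement (part (2) of the Wedderburn--Malcev theorem as quoted in the paper) produces a Levi subalgebra $S=(1+r)S_{0}(1+r)^{-1}$ containing $e$ and $f$; after that, $eaf=esf+enf$ with $esf\in eAf\cap S$ and $enf\in eAf\cap R$ gives the splitting in $A$ immediately. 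Two small remarks. First, you should dispose of the degenerate case where $e$ or $f$ is zero (then $B=0$ and everything is trivial); your claim that $T\cong\mathbb{F}\times\mathbb{F}$ tacitly assumes both are non-zero. Second, for part (ii) you correctly note that Lemma \ref{lem:If split in A, then split in L} carries the hypothesis $p\ne2$, which does not appear in the statement of the present lemma; this is not a defect of your proof but of the lemma as stated, since already the notion of splitting in $L$ rests on $S^{(k)}$ being quasi semisimple (Proposition \ref{prop:=00005BA,A=00005D quasi Levi-1}), which fails for $p=2$. An alternative to citing Lemma \ref{lem:If split in A, then split in L} is to finish (ii) directly: since $B\subseteq L$ one has $eSf\subseteq S\cap A^{(k)}=S^{(k)}$ and $eRf\subseteq R\cap A^{(k)}=N$, so $B=(B\cap S^{(k)})\oplus(B\cap N)$ with respect to the quasi Levi decomposition $L=S^{(k)}\oplus N$.
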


\section{The Main Results}

Recall that $A$ is a finite dimensional associative algebra over
$\mathbb{F}$. If $\mathcal{I}_{A}$ is an ideal of $A$, then $\mathcal{I}_{A}^{(-)}$
is a Lie ideal (and subalgebra as well) of the Lie algebra $A^{(-)}$
under the Lie multiplication define by $[a,b]=ab-ba$ for all $a,b\in A$. 
\begin{thm}
\label{thm:B=00003DeAf bar-minimal of P} Let $\mathcal{I}_{A}$ be
an ideal of $A$ and let $B$ be a bar-minimal Jordan-Lie inner ideal
of $\mathcal{I}_{A}^{(1)}=[\mathcal{I}_{A}^{(-)},\mathcal{I}_{A}^{(-)}]$.
Then there is a strict orthogonal idempotent pair $(e,f)$ in $A$
such that $B=eAf$.
\end{thm}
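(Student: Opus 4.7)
The plan is to reduce the statement to Theorem~\ref{thm:BavShkTheorem1.1} applied to the ideal $I$ viewed as a finite-dimensional associative algebra in its own right, and then transfer the resulting idempotent pair up to $A$. This works because an idempotent pair in $I$ automatically lives in $A$, and because the ``sandwich'' space $eAf$ collapses to $eIf$ as soon as $e,f\in I$.

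More precisely, since $B$ is a bar-minimal J-Lie of $[I,I]=I^{(1)}$, I would apply Theorem~\ref{thm:BavShkTheorem1.1} to $I$ with $k=1$ to produce a strict orthogonal idempotent pair $(e',f')$ in $I$ with $B=e'If'$. Before invoking the theorem I need to check that the notion of ``bar-minimal'' is unchanged when the ambient algebra is taken to be $I$ rather than $A$; this reduces to the standard identity $\rad I=R\cap I$ for ideals of finite-dimensional algebras, which yields $\rad I\cap L=R\cap L$ for any Lie subspace $L\subseteq I$, and in particular for $L=[I,I]$.

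Setting $e:=e'$ and $f:=f'$, the pair $(e,f)$ is orthogonal in $A$ because it was orthogonal in $I$. For strictness in the sense of Definition~\ref{def:(e,f) non-degen-1}, I would observe that $\bar I=(I+R)/R$ is a two-sided ideal of the semisimple algebra $\bar A$ and therefore a sum of simple components of $\bar A$; on the components of $\bar A$ that lie inside $\bar I$ the pair $(\bar e,\bar f)$ inherits the strictness condition from $(e',f')$ (using that simple components of $\bar I$ are exactly the simple components of $\bar A$ contained in it), while on the remaining simple components both $\bar e$ and $\bar f$ project to zero. To verify $B=eAf$, the containment $B=e'If'\subseteq e'Af'=eAf$ is immediate; for the reverse, if $a\in A$ then $e'af'\in I$ (since $I$ is two-sided and $e'\in I$), and then $e'af'=e'(e'af')f'\in e'If'=B$.

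I do not anticipate a serious obstacle: the argument is essentially a lifting of idempotent data from $I$ to $A$, and the only technical points are the identification $\rad I=R\cap I$ and the fact that $\bar I$ sits as a direct summand of $\bar A$, which together make both the strictness condition and the equality $eAf=eIf$ transparent.
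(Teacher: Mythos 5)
Your proposal is correct and follows essentially the same route as the paper's own proof: apply Theorem~\ref{thm:BavShkTheorem1.1} to $I$ viewed as a finite dimensional algebra in its own right to obtain $B=eIf$ for a strict orthogonal idempotent pair in $I$, and then use that $I$ is a two-sided ideal containing $e,f$ to get $eAf=eIf$. You are in fact somewhat more careful than the paper, which does not explicitly verify that bar-minimality and strictness transfer between $I$ and $A$ (via $\rad I=R\cap I$ and the fact that $\bar I$ is a sum of simple components of $\bar A$); those are exactly the checks you supply.
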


\begin{proof}
Since $B$ is bar-minimal, $B=e\mathcal{I}_{A}f$ for some strict
orthogonal idempotent pair $(e,f)$ of $\mathcal{I}_{A}$. As $\mathcal{I}_{A}$
is an ideal of $A$, 
\[
BAB=e\mathcal{I}_{A}fAe\mathcal{I}_{A}f\subseteq e\mathcal{I}_{A}f=B.
\]
Hence, by Lemma \ref{lem:Bar=000026Row 4.1-1}, $B\subseteq[A,A]$,
so it is an inner ideal of $[A,A]$. As $e,f\in\mathcal{I}_{A}$,
we have

\[
e\mathcal{I}_{A}f\subseteq eAf=eeAf\subseteq e\mathcal{I}_{A}Af\subseteq e\mathcal{I}_{A}f.
\]
Therefore, $B=e\mathcal{I}_{A}f=eAf$, as required. 
\end{proof}
\begin{prop}
\label{prop:B'<eAf  if R^2=00003D0} Let $A$ be a $1$-perfect finite
dimensional associative algebra and $R$ be its radical. Let $A'\subseteq A$
be a perfect finite dimensional associative algebra and let $B'$
be a bar-minimal J-Lie inner ideal of $A'$. Suppose that $p\ne2,3$
and $R^{2}=0$. Then there is a strict idempotent pair $(e,f)$ in
$A$ such that $B'\subseteq eAf$.
\end{prop}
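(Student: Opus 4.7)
The plan is to combine Theorem~\ref{thm:BavShkTheorem1.1} (the BavShk characterization of bar-minimal J-Lies as $eAf$ for strict orthogonal pairs) with Malcev's conjugacy statement from the Wedderburn-Malcev Theorem, so as to embed both idempotents coming from $B'$ into a single Levi subalgebra of $A$, and then to enlarge each of them to the identity of the relevant simple blocks. Concretely, applying Theorem~\ref{thm:BavShkTheorem1.1} to the bar-minimal J-Lie $B'$ of the perfect algebra $A'$ yields $B' = e'A'f'$ for a strict orthogonal idempotent pair $(e',f')$ in $A'$. We may assume $B'\neq 0$, so $e'$ and $f'$ are non-zero orthogonal idempotents of $A$.

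The crucial step is that the $2$-dimensional subalgebra $Q = \bbF e'\oplus \bbF f'\cong \bbF\times \bbF$ is commutative and semisimple, so by the Malcev part of the Wedderburn-Malcev Theorem it is contained in some Levi subalgebra of $A$. Fix such a Levi $S$ (with $A = S\oplus R$ and $e',f'\in S$), decompose $S = S_1\oplus\cdots\oplus S_k$ into simple components and write $e' = \sum_i e'_i$, $f' = \sum_i f'_i$ with $e'_i,f'_i\in S_i$. Setting $J = \{\,i : e'_i\neq 0 \text{ or } f'_i\neq 0\,\}$, I would then define
\[
e \;=\; f \;:=\; \sum_{i\in J} 1_{S_i} \;\in\; S,
\]
where $1_{S_i}$ denotes the identity of the simple block $S_i$. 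The element $e$ is an idempotent of $A$; under the canonical isomorphism $\bar A\cong S$, the projections of $\bar e$ and $\bar f$ to every $S_i$ coincide (equal to $1_{S_i}$ for $i\in J$ and $0$ otherwise), so $(e,f)$ is strict in $A$ in the sense of Definition~\ref{def:(e,f) non-degen-1}. Finally, a direct computation in $S$ (using that $1_{S_i}\cdot e'_i = e'_i$ and $f'_i\cdot 1_{S_i} = f'_i$, while $1_{S_j}e'_i = 0$ for $j\ne i$) gives $ee' = e'$ and $f'f = f'$, whence
\[
B' \;=\; e'A'f' \;=\; (ee')\,A'\,(f'f) \;\subseteq\; eAf,
\]
as required.

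The main technical obstacle lies in the second step: one must arrange that $e'$ and $f'$ simultaneously belong to a common Levi complement of $A$. This is exactly what Malcev's conjugacy statement delivers, applied to the semisimple subalgebra $Q$. Without such a simultaneous placement, the simple components of $e'$ and $f'$ modulo $R$ would not be realised by elements of any single Wedderburn decomposition, and the \emph{fill in the simple blocks} recipe above could not be executed. The hypothesis $R^2 = 0$ does not intervene in the argument sketched here; it appears to be reserved for an induction on the nilpotency class of $R$ in the passage from this proposition to the general Theorem~\ref{thm:B'<eAf}.
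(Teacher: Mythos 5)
There is a genuine gap, and it is concentrated in your final construction. You set $e=f:=\sum_{i\in J}1_{S_i}$. Such a pair can never be orthogonal (unless $e=0$, which forces $B'=0$), and orthogonality is the entire content of the proposition: it is what makes $(eAf)^2=eA(fe)Af=0$, so that $eAf$ is a J-Lie. Every downstream use of this proposition needs that. In the induction step of Theorem \ref{thm:B'<eAf} the pair produced here is used as a \emph{strict orthogonal} idempotent pair, and Corollary \ref{cor:B'<B minimal and regular} concludes from Theorem \ref{thm:BavShkTheorem1.1} and Lemma \ref{lem:Bar=000026Row 4.1} that $eAf$ is a bar-minimal J-Lie, respectively a regular inner ideal --- both false for $e=f$, since then $eAf=eAe$ is a unital subalgebra, not square-zero. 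Read with orthogonality dropped, your statement is vacuous: one may simply take $e=f=1_S$ and get $B'\subseteq A=eAf$ for free. (The wording ``strict idempotent pair'' in the proposition is a typo for ``strict orthogonal idempotent pair,'' as the surrounding statements and the paper's own proof, which verifies $fe=0$, make clear.) A related symptom is your closing remark that $R^2=0$ ``does not intervene'': in the paper's argument it is exactly the hypothesis that makes the construction of the orthogonal pair work, and an argument in which it plays no role should have raised suspicion.

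For contrast, the paper does not use Malcev at all. It fixes a Levi decomposition $A=S\oplus R$, writes $f'=s+r$, and uses $R^2=0$ to check that $s$ is idempotent, that $srs=0$, and hence that $f:=s+sr$ is an idempotent satisfying $f'f=f'$ and $ff'=f$; similarly it builds $e$ with $ee'=e'$ and $e'e=e$. Then $B'=e'A'f'=ee'A'f'f\subseteq eAf$, orthogonality follows from $fe=(ff')(e'e)=f(f'e')e=0$, and strictness in $A$ is repaired at the end via Theorem \ref{thm:Bavshk1.2}(ii). Your Malcev step --- placing the semisimple subalgebra $\bbF e'\oplus\bbF f'$ inside a single Levi subalgebra of $A$ --- is legitimate and would give you an \emph{orthogonal} idempotent pair $(e',f')$ in $A$ with $B'\subseteq e'Af'$; what it does not give you is strictness of $(e',f')$ with respect to the simple components of $A/R$ (strictness in $A'$ concerns the blocks of $A'/\rad A'$, a different decomposition). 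That is the point where some genuine work remains, and ``filling in the simple blocks with $1_{S_i}$'' is not an admissible fix because it sacrifices orthogonality.
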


\begin{proof}
Since $B'$ is bar-minimal, by Theorem \ref{thm:BavShkTheorem1.1},
there is a strict orthogonal idempotent pair $(e',f')$ in $A'$ such
that $B'=e'A'f'$. Let $\ccL'=A'f'$ and $\ccR'=e'A'$. Then $\ccL'$
and $\ccR'$ be left and right ideals of $A'$, respectively, with
$B'=\ccR'\ccL'$ and $\ccL'\ccR'=0$. Fix a Levi subalgebra $S$ of
$A$. Then $A=S\oplus R$. Since $f'\in A'\subseteq A=S\oplus R$
, we have $f'=s+r$ for some $s\in S$ and $r\in R$. As $f'=f'f'$,
we have $s+r=f'=f'f'=ss+sr+rs+r^{2}$, so $s=ss$ and $r=rs+sr$.
Note that $srs=0$. Indeed, $f'f'=(s+sr+rs)(s+sr+rs)=f'+2srs$, so
$srs=0$. Put $f=s+sr$. Then 
\[
f^{2}=(s+sr)(s+sr)=s+sr+srs=s+sr=f,
\]
so $f$ is an idempotent with 
\[
f'f=(s+sr+rs)(s+sr)=s+sr+rs=f'
\]
 and 
\[
ff'=(s+sr)(s+sr+rs)=s+sr+2srs=s+sr=f.
\]
Similarly, there is idempotent $e$ in $A$ with $ee'=e'$ and $e'e=e$.
Therefore, $B'=e'A'f'\subseteq e'Af'=ee'Af'f\subseteq eAf$. Moreover,
we have $fe=(ff')(e'e)=f(f'e')e=0$, so by Lemma \ref{lem:Bar=000026Row 4.1},
one can assume that $(e,f)$ is an orthogonal idempotent pair. It
remains to show that $(e,f)$ is strict. Since $(e',f')$ strict and
$(e',f')\overset{\mathcal{LR}}{\le}(e,f)$ (because $e'Af'\subseteq eAf$,
by Theorem \ref{thm:Bavshk1.2}, there is a strict idempotent pair
$(e'',f'')$ in $A$ such that $e''Af''=eAf$, as required.
\end{proof}
Now, we are ready to prove our main results. 
\begin{proof}[Proof of Theorem \ref{thm:B'<eAf}]
 Recall that $p\ne2,3$, $A$ is $1$-perfect, $A'\subseteq A$ is
a subalgebra and $B'$ is bar-minimal J-Lie of $A'$. We need to prove
that there is a strict idempotent pair $(e,f)$ in $A$ such that
$B'\subseteq eAf$.

Recall that $B'$ is bar-minimal. Since $R=\rad A$ is nilpotent,
there is an integer $m$ such that $R^{m-1}\neq0$ and $R^{m}=0$.
The proof is by induction on $m$. If $m=2$ , then by Proposition
\ref{prop:B'<eAf  if R^2=00003D0}, $B'\subseteq eAf$ for some idempotent
pair $(e,f)$ in $A$. 

Suppose that $m>2$. Put $T=R^{2}\neq0$ and consider $\tilde{A}=A/T$.
Let $\tilde{A}'$, $\tilde{B}'$ and $\tilde{R}$ be the images of
$\tilde{A}'$, $B'$ and $R$ in $\tilde{A}$. Since $\tilde{R}^{2}=R^{2}=0$
in $\tilde{A}$, by Proposition \ref{prop:B'<eAf  if R^2=00003D0},
$\tilde{B}'\subseteq\tilde{e}\tilde{A}\tilde{f}$ for some strict
orthogonal idempotent pair $(\tilde{e},\tilde{f})$ in $\tilde{A}.$
By Theorem, there is a Levi subalgebra $S$ of $\tilde{A}$ such that
$\tilde{e},\tilde{f}\in S$. Thus, $\tilde{B}'\subseteq\tilde{e}S\tilde{f}\oplus\tilde{e}S\tilde{f}$.
Let $P$ be the full preimage of $\tilde{e}S\tilde{f}$ in $B'$.
Then $\tilde{P}=\tilde{e}S\tilde{f}\subseteq S$, so $P$ is a subspace
of $B'$ with $\bar{P}=\bar{B'}$. Let $G'$ and $G$ be the full
preimages of $S$ in $A'$ and $A$, respectively. Then $G'$ and
$G$ are large subalgebras of $A'$ and $A$, respectively, with $G'\subseteq G$
and $P\subseteq G'\cap B'$. Let $S'$ be a Levi subalgebra of $G'$
such that $S'\subseteq S$. Put $P_{1}=[P,[P,S'{}^{(1)}]]$ and $B_{1}=B'\cap G'^{(1)}$.
Then $P_{1}\subseteq[B',[B',A'^{(1)}]]\subseteq B'$ and $P_{1}\subseteq[G',[G',G']]\subseteq G'^{(1)}$,
so $P_{1}\subseteq B'\cap G'^{(1)}=B_{1}$. Since 
\[
\bar{P}_{1}=[\bar{P},[\bar{P},\bar{S}'^{(1)}]]=[\bar{B}',[\bar{B}',\bar{A}'^{(1)}]]=\bar{B}',
\]
we get that $\bar{B'}=\bar{P}_{1}\subseteq\bar{B}_{1}\subseteq\bar{B'}$,
so $\bar{B}_{1}=\bar{B}'$. As $G'^{(1)}$ is a Lie subalgebra of
$A'^{(1)}$, $B_{1}=B'\cap G'^{(1)}$ is a J-Lie of $G'^{(1)}\subseteq G^{(1)}$.
Put $B_{2}=\core_{G'^{(1)}}(B_{1})$. Then by Proposition \ref{cor:cor(B) splits},
$B_{2}$ is a J-Lie of $\ccP_{G'}^{(1)}$ such that $B_{2}\subseteq B'$
and $\bar{B}_{2}=\bar{B}'$. Let $B_{3}\subseteq B_{2}$ be any $\bar{B}_{2}$-minimal
inner ideal of $\mathcal{P}_{G'}^{(1)}$. Since $\ccP_{G'}\subseteq\mathcal{P}_{G}$
is $1$-perfect and $\rad(\ccP_{G})^{m-1}\subseteq T^{m-1}=R^{2(m-1)}=0$,
by the inductive hypothesis, there is a strict idempotent pair $(e,f)$
in $\mathcal{P}_{G}$ such that $B_{3}\subseteq e\mathcal{P}_{G}f$.
By ?, $eAf=e\mathcal{P}_{G}f$, so $B_{3}\subseteq eAf$ for some
orthogonal idempotent pair $(e,f)$ in $A$. Thus, $B_{3}\subseteq B'\cap eAf$.
Since $\bar{B}'=\bar{B}_{3}\subseteq\overline{B'\cap eAf}\subseteq\bar{B}'$.
Hence, $\bar{B}'=\overline{B'\cap eAf}=\bar{B}'\cap\bar{e}\bar{A}\bar{f}$.
Note that $B'\cap eAf$ is a J-Lie of $A'$ with $B'\cap eAf\subseteq B'$.
Since $\bar{B}'=\overline{B'\cap eAf}$ and $\bar{B}'$ is bar-minimal,
$B'\cap eAf=B'$. Therefore, $B'=B'\cap eAf\subseteq eAf$, as required.
\end{proof}
\begin{proof}[Proof of Corollary \ref{cor:B'<B minimal and regular}]
 Recall that $p\ne2,3$, $A$ is $1$-perfect, $A'\subseteq A$ is
subalgebra and $B'$ is a bar-minimal J-Lie of $A'$. We need to prove
that 

1. There is a bar-minimal J-Lie $B$ of $A$ such that $B'\subseteq B$.

2. There is a regular inner ideal $B$ of $A$ such that $B'\subseteq B$.

By Theorem \ref{thm:B'<eAf}, $B'\subseteq eAf$ for some strict orthogonal
idempotent pair $(e,f)$ in $A$. 

For the proof of (1), by Lemma \ref{thm:BavShkTheorem1.1} we have
$eAf$ is a bar-minimal J-Lie of $A$. Therefore, $B=eAf$ is the
required bar-minimal J-Lie of $A$.

For the proof of (2), by Lemma \ref{lem:Bar=000026Row 4.1}, we have
$eAf$ is a regular inner ideal of $A$. Hence $B=eAf$ is the required
regular inner ideal of $A$. 
\end{proof}
\begin{proof}[Proof of Corollary \ref{cor:e'Af'<eAf}]
 Recall that $p\ne2,3$, $A$ is $1$-perfect and $A'\subseteq A$
is subalgebra. We need to prove that for every strict orthogonal idempotent
pair $(e',f')$ in $A'$, there is a strict orthogonal idempotent
pair $(e,f)$ in $A$ such that $e'A'f'\subseteq eAf$. By Lemma \ref{thm:BavShkTheorem1.1},
$e'A'f'$ is a bar-minimal J-Lie of $A'.$Since $A$ is $1$-perfect,
by Theorem \ref{thm:B'<eAf}, there is a strict orthogonal idempotent
pair $(e,f)$ in $A$ such that $e'A'f'\subseteq eAf$, as required.
\end{proof}

\subsubsection*{}

\end{document}